\documentclass[preprint,12pt]{elsarticle}

\usepackage{amssymb}
\usepackage{amsthm}
\usepackage{ amsfonts}
\usepackage{amsmath}

%%vstavka
% \usepackage{xcolor}
% \usepackage{color,colortbl}
% \definecolor{LightCyan}{rgb}{0.88,1,1}
% \definecolor{Gray}{gray}{0.9}
% \definecolor{lightRed}{RGB}{230,170,150}
% 
%% vstavka

\newtheorem{thm}{Theorem}
\newtheorem{stat}{Statement}
\newtheorem{lemma}{Lemma}
\newdefinition{defi}{Definition}
\newdefinition{remark}{Remark}
\biboptions{sort,compress}

\journal{Journal of Complexity}

\begin{document}

\begin{frontmatter}

  \title{Upper and lower estimates for numerical integration errors on spheres
    of arbitrary dimension}

\author{Peter Grabner\fnref{thanks}\corref{cor}}
\ead{peter.grabner@tugraz.at}
\author{Tetiana Stepanyuk\fnref{thanks}}
\ead{t.stepaniuk@tugraz.at}
\address{Graz University of Technology, Institute of Analysis and Number
  Theory, Kopernikusgasse 24/II 8010, Graz, Austria}
\fntext[thanks]{The authors are supported by the Austrian Science Fund FWF
  project F5503 (part of the Special Research Program (SFB) 
``Quasi-Monte Carlo Methods: Theory and Applications'')}

\cortext[cor]{Corresponding author}

\begin{abstract}
  In this paper we study the worst-case error of numerical integration on the
  unit sphere $\mathbb{S}^{d}\subset\mathbb{R}^{d+1}$, $d\geq2$, for certain
  spaces of continuous functions on $\mathbb{S}^{d}$. For the classical Sobolev
  spaces $\mathbb{H}^s(\mathbb{S}^d)$ ($s>\frac d2$) upper and lower bounds for
  the worst case integration error have been obtained in
  \cite{Brauchart-Hesse2007:numerical_integration,
    Hesse-Sloan2006:cubature_sobolev,
    Hesse2006:lower_cubature,Hesse-Sloan2005:optimal_lower}. We investigate the
  behaviour for $s\to\frac d2$ by introducing spaces
  $\mathbb{H}^{\frac d2,\gamma}(\mathbb{S}^d)$ with an extra logarithmic
  weight. For these spaces we obtain similar upper and lower bounds for the
  worst case integration error.

%   Coefficients
%   in the Fourier-Laplace series of these functions (with respect to the
%   spherical harmonics)
%   $\sum\limits_{\ell=0}^{\infty}\sum\limits_{k=1}^{Z(d,\ell)}\hat{f}_{\ell,k}Y_{\ell,k}^{(d)}(\mathbf{x})$
%   are such that the product $\ell^{d}\hat{f}_{\ell,k}$ decreases approximately
%   like logarithmic function. The class of such functions is denoted by
%   $\mathbb{H}^{(\frac{d}{2},\gamma)}(\mathbb{S}^{d})$, $\gamma>\frac{1}{2}$.

%   We show that the worst-case cubature error
%   $\mathrm{wce}(Q[X_{N}];\mathbb{H}^{(\frac{d}{2},\gamma)}(\mathbb{S}^{d}))$,
%   where $X_{N}$ is a well-separated $t$-design ($t\asymp N^{\frac{1}{d}}$) has
%   the order $N^{-\frac{1}{2}}(\ln N)^{-\gamma+\frac{1}{2}}$. For arbitrary
%   $N$-point cubature rule $Q[X_{N},\omega]$ we prove, that 
%   $\mathrm{wce}(Q[X_{N},\omega];\mathbb{H}^{(\frac{d}{2},\gamma)}(\mathbb{S}^{d}))\geq
%   C_{d,\gamma}N^{-\frac{1}{2}}(\ln N)^{-\gamma}$.

%   Also we consider Quasi Monte Carlo (QMC) rules for functions in classes
%   $\mathbb{H}^{(\frac{d}{2},\gamma)}(\mathbb{S}^{d})$. If $X_{N}$ is a sequence
%   of QMC designs for Sobolev spaces $\mathbb{H}^{s}(\mathbb{S}^{d})$, we prove
%   that it is also a sequence of QMC designs for
%   $\mathbb{H}^{(\frac{d}{2},\gamma)}(\mathbb{S}^{d})$ for all
%   $\gamma>\frac{1}{2}$.
%
\end{abstract}

\begin{keyword}
Worst-case error, numerical integration, cubature rules, reproducing kernel,
$t$-design, QMC design,  sphere.
\MSC[2010]{41A55, 33C45, 41A63}
\end{keyword}

\end{frontmatter}

\section{Introduction}
\label{intro}

Let $\mathbb{S}^{d}\subset\mathbb{R}^{d+1}$, where $d\geq2$ denote the unit
sphere in the Euclidean space $\mathbb{R}^{d+1}$. The integral of a continuous
function $f: \mathbb{S}^{d}\rightarrow\mathbb{R}$, denoted by
\begin{equation*}
\mathrm{I}(f):=\int_{\mathbb{S}^{d}}f(\mathbf{x})d\sigma_{d}(\mathbf{x}),
\end{equation*}
where $d\sigma_{d}(\mathbf{x})$ is the normalised surface (Lebesgue) measure on
$\mathbb{S}^{d}$ (i.e., 
$\sigma_d(\mathbb{S}^d)=1$), is approximated by an
$N$-point numerical integration rule $Q[X_{N},\omega](f)$
\begin{equation*}
  Q[X_{N},\omega](f)=Q[X_{N},(\omega_{j})_{j=1}^{N}](f):=
  \sum\limits_{i=1}^{N}\omega_{i}f(\mathbf{x}_{i})
\end{equation*}
with nodes $\mathbf{x}_{1},\ldots,\mathbf{x}_{N}\in\mathbb{S}^{d}$ and
associated weights $\omega_{1},\ldots,\omega_{N}\in\mathbb{R}$. We will always
assume that the weights satisfy the relation
\begin{equation*}
\sum\limits_{i=1}^{N}\omega_{i}=1.
\end{equation*}

By $Q[X_{N}](f)$ we will denote the equal weight numerical integration rule:
\begin{equation*}
Q[X_{N}](f):=\frac{1}{N}\sum\limits_{i=1}^{N}f(\mathbf{x}_{i}).
\end{equation*}
The worst-case (cubature) error of the cubature rule $Q[X_{N},\omega]$ in a
Banach space $B$ of continuous functions on $\mathbb{S}^{d}$ with norm
$\|\cdot\|_{B}$ is defined by
 \begin{equation}\label{wce}
 \mathrm{wce}(Q[X_{N},\omega];B):=\sup\limits_{f\in B,\|f\|_{B}\leq1}
 |Q[X_{N}](f)-\mathrm{I}(f)|.
 \end{equation}
 In this work we consider reproducing kernel Hilbert spaces
 $\mathbb{H}^{(\frac{d}{2},\gamma)}(\mathbb{S}^{d})$, which interpolate the
 classical spaces $\mathbb{H}^{s}(\mathbb{S}^{d})$ for $s\to\frac d2$, (see
 Section~\ref{prelim} for a precise definition).
 
 The paper is organised as follows.

 Section~\ref{prelim} provides necessary background for Jacobi polynomials, the
 spaces $\mathbb{H}^{s}(\mathbb{S}^{d})$ and
 $\mathbb{H}^{(\frac{d}{2},\gamma)}(\mathbb{S}^{d})$, their associated
 reproducing kernels and an expression for the worst-case error.
 
 In Section~\ref{designs} we find upper and lower bounds of equal weight
 numerical integration over the unit sphere
 $\mathbb{S}^{d}\subset\mathbb{R}^{d+1}$ for functions in the space
 $\mathbb{H}^{(\frac{d}{2},\gamma)}(\mathbb{S}^{d})$, $\gamma>\frac{1}{2}$. In
 this section we consider sequences $X_{N(t)}$ of well-separated
 $t$-designs. Here we also assume that $t\asymp N^{\frac{1}{d}}$. Such
 $t$-designs exist by
 \cite{Bondarenko-Radchenko-Viazovska2015:well_separated}. We write
 $a_{n}\asymp b_{n}$ to mean that there exist positive constants $C_{1}$ and
 $C_{2}$ independent of $n$ such that $C_{1}a_{n}\leq b_{n}\leq C_{2}a_{n}$ for
 all $n$.

We show that
\begin{equation}\label{eq:lower}
  C_{d,\gamma}N^{-\frac{1}{2}}\left(\ln
    N\right)^{-\gamma}\leq
  \mathrm{wce}(Q[X_{N},\omega];\mathbb{H}^{(\frac{d}{2},\gamma)}(\mathbb{S}^{d}))
\end{equation}
for all quadrature rules $Q[X_N,\omega]$ and provide examples of quadrature
rules which satisfy
\begin{equation}\label{eq:wce}
  C_{d,\gamma}^{(1)}N^{-\frac{1}{2}}\left(\ln
    N\right)^{-\gamma+\frac{1}{2}}\leq \mathrm{wce}(Q[X_{N}];\mathbb{H}^{(\frac{d}{2},\gamma)}(\mathbb{S}^{d}))\leq
  C_{d,\gamma}^{(2)}N^{-\frac{1}{2}}\left(\ln
    N\right)^{-\gamma+\frac{1}{2}},
\end{equation}
where the positive constants $C_{d,\gamma}^{(1)}$ and $C_{d,\gamma}^{(2)}$
depend only on $d$ and $\gamma$, but are independent of the rule $Q[X_{N}]$ and
the number of nodes $N$ of the rule.

Here and further by $C_{\gamma,d}$, $C_{\gamma,d}^{(1)}$ and
$C_{\gamma,d}^{(2)}$ we denote some positive constants, which depend only on
$d$ and $\gamma$ and can be different in different relations.

The upper estimate of this result is an extension of results in
\cite{Brauchart-Hesse2007:numerical_integration,
  Hesse-Sloan2006:cubature_sobolev}, where the upper bound for the worst-case
error in the Sobolev space $\mathbb{H}^{s}(\mathbb{S}^{d})$, $s>\frac d2$, (see
Section~\ref{prelim} for a precise definition) of a sequence of cubature rules
$Q[X_{N}]$ was found. In these papers the sequence $Q[X_{N}]$ integrates all
spherical polynomials of degree $\leq t$ exactly and satisfies a certain local
regularity property.

In Section~\ref{lower} we show that the worst-case error for functions in the
space $\mathbb{H}^{(\frac{d}{2},\gamma)}(\mathbb{S}^{d})$,
$\gamma>\frac{1}{2}$, for an arbitrary $N$-point cubature rule
$Q[X_{N}, \omega]$ has the lower bound
\begin{equation*}
  \mathrm{wce}(Q[X_{N},\omega];\mathbb{H}^{(\frac{d}{2},\gamma)}(\mathbb{S}^{d}))
  \geq C_{d,\gamma}N^{-\frac{1}{2}}\left(\ln N\right)^{-\gamma},
\end{equation*}
where the positive constant $C_{d,\gamma}$ depends only on $d$ and $\gamma$,
but is independent of the rule $Q[X_{N}]$ and the number of nodes $N$ of the
rule. On the basis of the estimate \eqref{eq:wce}, we can make a conjecture
that the order of convergence
$\mathcal{O}(N^{-\frac{1}{2}}\left(\ln N\right)^{-\gamma+\frac{1}{2}})$ is
optimal for classes $\mathbb{H}^{(\frac{d}{2},\gamma)}(\mathbb{S}^{d})$.
 
In Section~\ref{QMC} we analyse QMC designs for
$\mathbb{H}^{(\frac{d}{2},\gamma)}(\mathbb{S}^{d})$ and compare them with QMC
designs for Sobolev spaces $\mathbb{H}^{s}(\mathbb{S}^{d})$. We prove that if
$X_{N}$ is a sequence of QMC designs for Sobolev spaces
$\mathbb{H}^{s}(\mathbb{S}^{d})$, $s>\frac{d}{2}$, it is also a sequence of QMC
designs for $\mathbb{H}^{(\frac{d}{2},\gamma)}(\mathbb{S}^{d})$ for all
$\gamma>\frac{1}{2}$.
 
We remark here that J.~Beck \cite{Beck_Chen1987:irregularities_distribution,
  Beck1984:sums_distances_between} could show a lower bound for the spherical
cap discrepancy of order $N^{-1/2-1/2d}$; he proved by probabilistic means that
for every $N$ there exists a point set $X_N$ with discrepancy of order
$N^{-1/2-1/2d}\sqrt{\log N}$. Beck's lower bound can be reproved by using the
techniques found by D.~Bilyk and F.~Dai \cite{Bilyk-Dai2016:geodesic_riesz},
which we will refer to in more detail in Section~\ref{lower}. The $\sqrt{\log
  N}$-factor between the lower and the upper bound in \eqref{eq:lower} and
\eqref{eq:wce} resembles the difference between Beck's general lower bound and
the upper bound achieved by a probabilistic construction.
\section{Preliminaries}
\label{prelim}

\subsection{Background and basic notations}

We denote the Euclidean inner product of $\mathbf{x}$ and $\mathbf{y}$ in
$\mathbb{R}^{d+1}$ by $\langle\mathbf{x},\mathbf{y}\rangle$.

We use the Pochhammer symbol $(a)_{n}$, where
$n\in \mathbb{N}_{0}$ and $a\in \mathbb{R}$, defined by
\begin{equation*}
  (a)_{0}:=1, \quad (a)_{n}:=a(a+1)\ldots(a+n-1)\quad \mathrm{for} \quad
  n\in \mathbb{N},
\end{equation*}
which can be  written in the terms of the gamma function $\Gamma(z)$ by means of
\begin{equation}\label{Pochhammer}
 (a)_{\ell}=\frac{\Gamma(\ell+a)}{\Gamma(a)}.
 \end{equation}
 The following asymptotic  relation holds
 \begin{equation}\label{gamma}
 \frac{\Gamma(z+a)}{\Gamma(z+b)}\sim z^{a-b}\quad \text{as } z\rightarrow\infty
 \quad\text{in the sector } |\arg z|\leq\pi-\delta
\end{equation}
for $\delta>0$.  Here, $f(x)\sim g(x)$, $x\rightarrow\infty$, means that
 \begin{equation*}
 \lim\limits_{x\rightarrow\infty}\frac{f(x)}{g(x)}=1.
 \end{equation*}
 
We denote, as usual, by $\{Y_{\ell,k}^{(d)}: k=1,\ldots, Z(d,\ell)\}$ a collection of 
$\mathbb{L}_{2}$-orthonormal real spherical harmonics (homogeneous harmonic  polynomials in $d+1$ variables  restricted to $\mathbb{S}^{d}$) of degree $\ell$ (see, e.g., \cite{Mueller1966:spherical_harmonics}).
The space of spherical harmonics of degree $\ell\in\mathbb{N}_{0}$ on 
$\mathbb{S}^{d}$ has the dimension
\begin{equation}\label{Zd}
  Z(d,0)=1,\quad Z(d,\ell)=(2\ell+d-1)
  \dfrac{\Gamma(\ell+d-1)}{\Gamma(d)\Gamma(\ell+1)}\sim
  \frac{2}{\Gamma(d)}\ell^{d-1}, \quad \ell\rightarrow\infty.
 \end{equation}
Each spherical harmonic  $Y_{\ell,k}^{(d)}$ of exact degree $\ell$ is an eigenfunction of the negative Laplace-Beltrami operator $-\Delta^{*}_{d}$ with eigenvalue
\begin{equation}\label{eigenvalue}
 \lambda_{\ell}:=\ell(\ell+d-1).
 \end{equation}
 
The spherical harmonics of degree $\ell$ satisfy the addition theorem: 
\begin{equation}\label{additiontheorem}
 \sum\limits_{k=1}^{Z(d,\ell)}Y_{\ell,k}^{(d)}(\mathbf{x})Y_{\ell,k}^{(d)}(\mathbf{y})=Z(d,\ell)P_{\ell}^{(d)}(\langle\mathbf{x},\mathbf{y}\rangle),
 \end{equation}
 where $P_{\ell}^{(d)}$ is the $\ell$-th generalised Legendre polynomial,
 normalised by ${P_{\ell}^{(d)}(1)=1}$ and orthogonal on the interval $[-1,1]$
 with respect to the weight function $(1-t^{2})^{d/2-1}$. These functions are
 zonal spherical harmonics on $\mathbb{S}^{d}$. Notice that
\begin{equation}\label{Jacobigegenbauer}
 Z(d,n)P_{n}^{(d)}(x)=\frac{n+\lambda}{\lambda}C_{n}^{\lambda}(x), \ \ \ \ 
  P_{n}^{(d)}(x)=\frac{n!}{(d/2)_{n}}P_{n}^{(\frac{d}{2}-1, \frac{d}{2}-1)}(x),
  \end{equation}
 where $C_{n}^{\lambda}(x)$ is the $n$-th Gegenbauer polynomial with index
 $\lambda=\dfrac{d-1}{2}$ and $P_{n}^{(\frac{d}{2}-1,\frac{d}{2}-1)}(x)$ are
 the Jacobi polynomials with the indices $\alpha=\beta= \frac{d}{2}-1$.

\subsection{Jacobi polynomials}
The Jacobi polynomials $P_{\ell}^{(\alpha,\beta)}(x)$ are the polynomials
orthogonal over the interval $[-1,1]$ with respect to the weight function
$w_{\alpha,\beta}(x)=(1-x)^{\alpha}(1+x)^{\beta}$ and normalised by the
relation
\begin{equation}\label{JacobiMax}
  P_{\ell}^{(\alpha,\beta)}(1)=\binom {\ell+\alpha}\ell=
  \frac{(1+\alpha)_{\ell}}{\ell!}\sim\frac{1}{\Gamma(1+\alpha)}\ell^{\alpha},
  \quad \alpha,\beta>-1.
 \end{equation}
 (see, e.g., \cite[(5.2.1)]{Magnus-Oberhettinger-Soni1966:formulas_theorems}).

 Also the following equality holds
\begin{equation}\label{JacobiMinus}
 P_{\ell}^{(\alpha,\beta)}(-x)=(-1)^{\ell} P_{\ell}^{(\beta,\alpha)}(x).
 \end{equation}

 For fixed ${\alpha, \beta>-1}$ and ${0< \theta<\pi}$, the following relation
 gives an asymptotic approximation for $\ell\rightarrow\infty$ (see,
 e.g.,\cite[Theorem 8.21.13]{Szegoe1975:orthogonal_polynomials})
\begin{multline*}
  P_{\ell}^{(\alpha,\beta)}(\cos \theta)=\frac{1}{\sqrt{\pi}}\ell^{-1/2}
  \Big(\sin\frac{\theta}{2}\Big)^{-\alpha-1/2}
  \Big(\cos\frac{\theta}{2}\Big)^{-\beta-1/2}\\
  \times\Big\{\cos \Big(\Big(\ell+\frac{\alpha+\beta+1}{2}\Big)\theta-
  \frac{2\alpha+1}{4}\pi\Big)+\mathcal{O}(\ell\sin\theta)^{-1}\Big\}.
\end{multline*}
Thus, for
$c_{\alpha,\beta}\ell^{-1}\leq\theta\leq\pi-c_{\alpha,\beta}\ell^{-1}$ the last
asymptotic equality yields
 \begin{equation}\label{JacobiIneq}
   |P_{\ell}^{(\alpha,\beta)}(\cos \theta)|\leq \tilde{c}_{\alpha,\beta}
   \ell^{-1/2}(\sin\theta)^{-\alpha-1/2}+
 \tilde{c}_{\alpha,\beta}\ell^{-3/2}(\sin\theta)^{-\alpha-3/2}, \quad\alpha\geq\beta.
 \end{equation}
 
 If $\alpha,\beta$ are real and $c$ is fixed positive constant, then as
 $\ell\rightarrow\infty$ (see, e.g.,
 \cite[(5.2.3)]{Magnus-Oberhettinger-Soni1966:formulas_theorems})
\begin{equation}\label{JacobiIneq1}
|P_{\ell}^{(\alpha,\beta)}(\cos \theta)|=\begin{cases}
  \mathcal{O}\big(\theta^{-\frac{1}{2}-\alpha}\ell^{-\frac{1}{2}}\big) & \text{if }
  \frac{c}{\ell}\leq \theta\leq \frac{\pi}{2}, \\
\mathcal{O}(\ell^{\alpha}) & \text{if }  0 \leq \theta\leq\frac{c}{\ell}.
  \end{cases}
\end{equation}
 
We will also use the formula (see, e.g.,
\cite[(4.5.3)]{Szegoe1975:orthogonal_polynomials})
  \begin{equation}\label{ChristoffelDarbu}
 \sum\limits_{\ell=0}^{n}\frac{2\ell+\alpha+\beta+1}{\alpha+\beta+1}\frac{(\alpha+\beta+1)_{\ell}}{(\beta+1)_{\ell}}P_{\ell}^{(\alpha, \beta)}(t)=
  \frac{(\alpha+\beta+2)_{n}}{(\beta+1)_{n}}P_{n}^{(\alpha+1,\beta)}(t).
\end{equation}

Choosing $\alpha=\beta=\frac{d}{2}-1$ and taking into account the relations
(\ref{Zd}) and (\ref{Jacobigegenbauer}), formula (\ref{ChristoffelDarbu}) also
reads
\begin{equation}\label{ChristoffelDarbu1}
 \sum\limits_{r=0}^{\ell}Z(d,r)P_{r}^{(d)}(t)=
  \sum\limits_{r=0}^{\ell}\frac{2r+d-1}{d-1}\frac{(d-1)_{r}}{(d/2)_{r}}P_{r}^{(\frac{d}{2}-1,\frac{d}{2}-1)}(t)
=
  \frac{(d)_{\ell}}{(d/2)_{\ell}}P_{\ell}^{(\frac{d}{2}, \frac{d}{2}-1)}(t).
\end{equation}
Substituting  $\alpha=\frac{d}{2}-1+k$ and $\beta=\frac{d}{2}-1$, formula (\ref{ChristoffelDarbu}) gives
\begin{equation}\label{ChristoffelDarbu2}
  \sum\limits_{r=0}^{\ell}\frac{2r+d-1+k}{d-1+k}\frac{(d-1+k)_{r}}{(d/2)_{r}}
  P_{r}^{(\frac{d}{2}-1+k,\frac{d}{2}-1)}(t)=
  \frac{(d+k)_{\ell}}{(d/2)_{\ell}}P_{\ell}^{(\frac{d}{2}+k,\frac{d}{2}-1)}(t).
\end{equation}

For any integrable function $f: [-1, 1]\rightarrow \mathbb{R}$ (see, e.g.,
\cite{Mueller1966:spherical_harmonics})
\begin{equation}\label{a1}
 \int\limits_{\mathbb{S}^{d}}f(\langle\mathbf{x},\mathbf{y}\rangle)d\sigma_{d}(\mathbf{x})=\frac{\Gamma(\frac{d+1}{2})}{\sqrt{\pi}\Gamma(\frac{d}{2})}\int\limits_{-1}^{1}f(t)(1-t^{2})^{\frac{d}{2}-1}dt \quad \forall \mathbf{y}\in \mathbb{S}^{d}.
\end{equation}
 For $\alpha>1$ and $L\in \mathbb{N}_{0}$, we have (see, e.g., formula (2.18) in  \cite{Brauchart-Hesse2007:numerical_integration})
 \begin{equation}\label{a2}
 \int\limits_{-1}^{1}P_{\ell}^{(\alpha+L,\alpha)}(t)(1-t^{2})^{\alpha}dt=2^{2\alpha+1}\frac{(L)_{\ell}}{\ell!}\frac{\Gamma(\alpha+1)\Gamma(\alpha+\ell+1)}{\Gamma(2\alpha+\ell+2)}.
\end{equation}
 This formula also can be easily derived with the help of Rodrigues' formula (see, e.g., \cite[(4.3.1)]{Szegoe1975:orthogonal_polynomials}).
 
In particular (\ref{a1}), (\ref{a2}) and (\ref{gamma}) imply
\begin{multline}\label{a3}
\int\limits_{\mathbb{S}^{d}}P_{\ell}^{(\frac{d}{2}+L,\frac{d}{2}-1)}(\langle\mathbf{x},\mathbf{y}\rangle)d\sigma_{d}(\mathbf{x})\\
=2^{d-1}\frac{\Gamma(\frac{d+1}{2})}{\sqrt{\pi}}\frac{(L+1)_{\ell}}{\ell!}\frac{\Gamma(\frac{d}{2}+\ell)}{\Gamma(d+\ell)}
\asymp 
\frac{\Gamma(L+\ell+1)}{\Gamma(\ell+1)}\frac{\Gamma(\frac{d}{2}+\ell)}{\Gamma(d+\ell)}
\asymp \ell^{L-\frac{d}{2}}.
\end{multline}

\subsection{The space of continuous functions on
  $\mathbb{S}^{d}$ and representation of
  worst-case error }
 
The Sobolev space  $\mathbb{H}^{s}(\mathbb{S}^{d})$ 
 for $s\geq 0$ consists of all functions $f\in\mathbb{L}_{2}(\mathbb{S}^{d})$ 
with finite norm 
 \begin{equation}\label{normSobolev}
\|f\|_{\mathbb{H}^{s}}=\bigg(\sum\limits_{\ell=0}^{\infty}\sum\limits_{k=1}^{Z(d,\ell)}
\left(1+\lambda_{\ell}\right)^{s}|\hat{f}_{\ell,k}|^{2}\bigg)^{\frac{1}{2}}, 
\end{equation}
 where the Laplace-Fourier coefficients are given by the formula
\begin{equation*}
  \hat{f}_{\ell,k}:=(f,Y_{\ell,k}^{(d)})_{\mathbb{S}^{d}}=
  \int_{\mathbb{S}^{d}}f(\mathbf{x})Y_{\ell,k}^{(d)}(\mathbf{x})
  d\sigma_{d}(\mathbf{x}).
\end{equation*}

For $s>\frac{d}{2}$ the space $\mathbb{H}^{s}(\mathbb{S}^{d})$ is embedded into
the space of continuous functions $\mathbb{C}(\mathbb{S}^{d})$.  This fact also
implies that point evaluation in $\mathbb{H}^{s}(\mathbb{S}^{d})$,
$s>\frac{d}{2}$, is bounded and $\mathbb{H}^{s}(\mathbb{S}^{d})$,
$s>\frac{d}{2}$, is a reproducing kernel Hilbert space.

In the row of papers \cite{Brauchart-Saff-Sloan+2014:qmc_designs,
  Brauchart-Hesse2007:numerical_integration, Hesse-Sloan2006:cubature_sobolev,
  Hesse2006:lower_cubature,Hesse-Sloan2005:optimal_lower}, the worst-case error
for Sobolev spaces $\mathbb{H}^{s}(\mathbb{S}^{d})$ in the case $s>\frac{d}{2}$
was studied. Our aim is to consider the class of functions, which are less
smooth than functions from $\mathbb{H}^{s}(\mathbb{S}^{d})$, $s>\frac{d}{2}$.

We define the space $\mathbb{H}^{(\frac{d}{2},\gamma)}(\mathbb{S}^{d})$ for
$\gamma>\dfrac{1}{2}$ as the set of all functions
$f\in\mathbb{L}_{2}(\mathbb{S}^{d})$ whose Laplace-Fourier coefficients satisfy
\begin{equation}\label{norm}
\|f\|_{\mathbb{H}^{(\frac{d}{2},\gamma)}}^2:=\sum\limits_{\ell=0}^{\infty}
w_{\ell}(d,\gamma)\sum\limits_{k=1}^{Z(d,\ell)}|\hat{f}_{\ell,k}|^{2}<\infty, 
\end{equation}
where
\begin{equation*}
 w_{\ell}(d,\gamma):=\left(1+\lambda_{\ell}\right)^{\frac{d}{2}}\left(\ln\left(3+\lambda_{\ell}\right)\right)^{2\gamma}.
\end{equation*}

The space $\mathbb{H}^{(\frac{d}{2},\gamma)}(\mathbb{S}^{d})$ is a Hilbert
space with a corresponding inner product denoted by
$(f,g)_{\mathbb{H}^{(\frac{d}{2},\gamma)}}$.  For $\gamma>\frac{1}{2}$ the
space $\mathbb{H}^{(\frac{d}{2},\gamma)}(\mathbb{S}^{d})$ is embedded into the
space of continuous functions $\mathbb{C}(\mathbb{S}^{d})$. Indeed, using the
Cauchy-Schwarz inequality we can show in the same way as in
\cite{Hesse2006:lower_cubature}, that for
$f\in\mathbb{H}^{(\frac{d}{2},\gamma)}(\mathbb{S}^{d})$
\begin{equation*}
\sup\limits_{\mathbf{x}\in\mathbb{S}^{d}}|f(\mathbf{x})|
\leq
 C_{d,\gamma}\|f\|_{\mathbb{H}^{(\frac{d}{2},\gamma)}}.
\end{equation*}

Embedding into $\mathbb{C}(\mathbb{S}^{d})$ implies that point evaluation in
$\mathbb{H}^{(\frac{d}{2},\gamma)}(\mathbb{S}^{d})$ with $\gamma>\frac{1}{2}$
is bounded, and consequently
$\mathbb{H}^{(\frac{d}{2},\gamma)}(\mathbb{S}^{d})$ is a reproducing kernel
Hilbert space. That is to say there exists a kernel $K_{d,\gamma}:$
$\mathbb{S}^{d}\times\mathbb{S}^{d}\rightarrow\mathbb{R}$, with the following
properties: (i)
$K_{d,\gamma}(\mathbf{x},\mathbf{y})=K_{d,\gamma}(\mathbf{y},\mathbf{x})$ for
all $\mathbf{x},\mathbf{y}\in \mathbb{S}^{d}$; (ii)
${K_{d,\gamma}(\cdot,\mathbf{x})\in
  \mathbb{H}^{(\frac{d}{2},\gamma)}(\mathbb{S}^{d})}$ for all fixed
$\mathbf{x}\in\mathbb{H}^{(\frac{d}{2},\gamma)}(\mathbb{S}^{d})$; and (iii) the
reproducing property
\begin{equation*}
(f,K_{d,\gamma}(\cdot,\mathbf{x}))_{\mathbb{H}^{(\frac{d}{2},\gamma)}}=f(\mathbf{x})
\quad \forall f\in\mathbb{H}^{(\frac{d}{2},\gamma)}(\mathbb{S}^{d}) \quad \forall
\mathbf{x} \in \mathbb{S}^{d}.
\end{equation*}

The reproducing kernel $K_{d,\gamma}$ in
$\mathbb{H}^{(\frac{d}{2},\gamma)}(\mathbb{S}^{d})$ is given by
\begin{equation}\label{kernel}
K_{d,\gamma}(\mathbf{x},\mathbf{y})
=\sum\limits_{\ell=0}^{\infty}w_{\ell}(d,\gamma)^{-1}Z(d,\ell)
 P_{\ell}^{(d)}(\langle\mathbf{x},\mathbf{y}\rangle).
 \end{equation}
 It is easily verified, that $K_{\gamma,d}$, defined by (\ref{kernel}) has the
 reproducing kernel properties.

 This kernel is a zonal function: $K_{\gamma,d}(\mathbf{x},\mathbf{y})$ depends
 only on the inner product $\langle\mathbf{x},\mathbf{y}\rangle$.

 Using arguments, as in (\cite{Brauchart-Saff-Sloan+2014:qmc_designs} or
 \cite{Hesse-Sloan2006:cubature_sobolev}), it is possible to write down an
 expression for the worst-case error. Indeed
 \begin{equation*}
 \mathrm{wce}(Q[X_{N},\omega];\mathbb{H}^{(\frac{d}{2},\gamma)}(\mathbb{S}^{d}))^{2}
=\sum\limits_{i,j=1}^{N}\omega_{i}\omega_{j}K_{d,\gamma}(\mathbf{x}_{i},\mathbf{x}_{j})-\int_{\mathbb{S}^{d}}K_{d,\gamma}(\mathbf{x},\mathbf{y})d\sigma_{d}(\mathbf{y}),
 \end{equation*}
 where we have used the reproducing property of $K_{d,\gamma}$.
 
 Therefore,
  \begin{equation}\label{wceKernel}
\mathrm{wce}(Q[X_{N},\omega];\mathbb{H}^{(\frac{d}{2},\gamma)}(\mathbb{S}^{d}))^{2}=\sum\limits_{i,j=1}^{N}\omega_{i}\omega_{j}\tilde{K}_{d,\gamma}(\mathbf{x}_{i},\mathbf{x}_{j}),
 \end{equation}
  where
 \begin{equation}\label{kernel0}
 \tilde{K}_{d,\gamma}(\mathbf{x},\mathbf{y})=\sum\limits_{\ell=1}^{\infty}w_{\ell}(d,\gamma)^{-1}Z(d,\ell)
 P_{\ell}^{(d)}(\langle\mathbf{x},\mathbf{y}\rangle).
 \end{equation}
 
%  The worst-case error for Sobolev classes has the form
% \begin{equation*}
% \mathrm{wce}(Q[X_{N},\omega];\mathbb{H}^{s}(\mathbb{S}^{d}))=
% \sup\limits_{
%  f\in \mathbb{H}^{s},\atop \|f\|_{\mathbb{H}^{s}}\leq1 }
%   \left|\sum\limits_{i=1}^{N}\omega_{i}f(\mathbf{x}_{i})-\int_{\mathbb{S}^{d}}f(\mathbf{x})d\sigma_{d}(\mathbf{x}) \right|=
% \end{equation*}
% \begin{equation}\label{wceKernelSobolev}
% =\bigg(\sum\limits_{i,j=1}^{N}\omega_{i}\omega_{j}\sum\limits_{\ell=1}^{\infty}\left(\ell+\frac{d-1}{2}\right)^{-2s}Z(d,\ell)
%  P_{\ell}^{(d)}(\langle\mathbf{x}_{i},\mathbf{x}_{j}\rangle)\bigg)^{\frac{1}{2}}.
%  \end{equation}
%

\section{Upper and lower bounds for the  worst-case error for well-separated $t$-designs}
\label{designs}
\begin{defi}\label{def1}
  A spherical $t$-design is a finite subset $X_{N}\subset \mathbb{S}^{d}$ with
  the characterising property that an equal weight integration rule with nodes
  from $X_{N}$ integrates all polynomials $p$ with degree $\leq t$ exactly;
  that is,
\begin{equation*}
  \frac{1}{N}\sum\limits_{\mathbf{x}\in X_{N}}p(\mathbf{x})=
  \int_{\mathbb{S}^{d}}p(\mathbf{x})d\sigma_{d}(\mathbf{x}), \quad
  \mathrm{deg}(p)\leq t.
\end{equation*}
Here $N$ is the number of points of the spherical design.
\end{defi}

A concept of $t$-design was introduced in the paper
\cite{Delsarte-Goethals-Seidel1977:spherical_designs} by Delsarte, Goethals and
Seidel. There it was proved that the number of points for a $t$-design has to
satisfy $N\geq C_dt^d$ for a positive constant $C_d$.

Bondarenko, Radchenko and Viazovska
\cite{Bondarenko-Radchenko-Viazovska2013:optimal_designs} proved that there
always exist spherical $t$-designs with $N\asymp t^{d}$ points.  That is why in
this section we always assume that
 \begin{equation}\label{constantc0}
 N=N(t)\asymp t^{d}.
 \end{equation} 
 Then
 \begin{equation*}
\frac{1}{N^{2}}\sum\limits_{i=1}^{N}\sum\limits_{j=1}^{N}
P_{\ell}^{(d)}(\langle\mathbf{x}_{i},\mathbf{x}_{j}\rangle)=0,
\quad\text{for }\ell=1,\ldots,t.
 \end{equation*}
Thus for such sequences $Q[X_{N(t)}]$ (\ref{wceKernel})  simplifies to
\begin{equation}\label{wceTdesign}
\mathrm{wce}(Q[X_{N(t)}];\mathbb{H}^{(\frac{d}{2},\gamma)}(\mathbb{S}^{d}))^{2}
=\frac{1}{N^{2}}\sum\limits_{i,j=1}^{N}\sum\limits_{\ell=t+1}^{\infty}w_{\ell}(d,\gamma)^{-1}Z(d,\ell)
 P_{\ell}^{(d)}(\langle\mathbf{x}_{i},\mathbf{x}_{j}\rangle).
\end{equation}

By a spherical cap $S(\mathbf{x}; \varphi)$ of centre $\mathbf{x}$ and angular
radius $\varphi$ we mean
 \begin{equation*}
 S(\mathbf{x}; \varphi):=\big\{\mathbf{y}\in \mathbb{S}^{d} \big| \langle\mathbf{x},\mathbf{y}\rangle\geq \cos\varphi \big\}.
 \end{equation*}
The normalised surface area of a spherical cap is given by 
\begin{equation}\label{capArea}
|S(\mathbf{x}; \varphi)|=\frac{\Gamma((d+1)/2)}{\sqrt{\pi}\Gamma(d/2)}
\int\limits_{\cos\varphi}^{1}(1-t^{2})^{\frac{d}{2}-1}dt
\asymp(1-\cos\varphi)^{\frac{d}{2}} \quad\text{as } \varphi\rightarrow 0.
\end{equation}
Here and in the sequel we use $|S|$ as a shorthand for $\sigma_d(S)$ for
$S\subset\mathbb{S}^d$.

\begin{defi}[Property (R)]
  A sequence $(Q[X_{N(t)},\omega])_{t\in\mathbb{N}}$ of numerical integration
  rules $Q[X_{N(t)},\omega]$, which integrates all spherical polynomials of
  degree $\leq t$ exactly, that is
\begin{equation*}
  \sum\limits_{j=1}^{N(t)}\omega_{j}p(\mathbf{x}_{j})=
  \int\limits_{\mathbb{S}^{d}}p(\mathbf{x})d\sigma_{d}(\mathbf{x}), \quad
  \mathrm{deg}(p)\leq t.
\end{equation*}
is said to have property (R) (or to be ``quadrature regular''), if there
exist positive numbers $c_{1}$ and $c_{2}$ independent of $t$ with
$c_{1}\leq\pi/2$, such that for all $t\geq1$ the weights $\omega_{j}$
associated with the nodes $\mathbf{x}_{j}, j=1,\ldots,N(t)$ of $Q[X_{N(t)}]$
satisfy
  \begin{equation}\label{propertyR}
{\mathop{\sum}\limits_{
 j=1,\atop \mathbf{x}_{j} \in S(\mathbf{x};\frac{c_{1}}{t})}^{N(t)}} 
|\omega_{j}| \leq c_{2}\Big| S(\mathbf{x};\frac{c_{1}}{t}) \Big|\quad
\forall \mathbf{x}\in \mathbb{S}^{d}.
 \end{equation}
 \end{defi}

 Reimer \cite{Reimer2000:hyperinterpolation_sphere} has shown that every
 sequence of positive weight cubature rules $Q[X_{N(t)},\omega]$, with
 $Q[X_{N(t)},\omega](p)=\mathrm{I}(p)$ for all polynomials $p$ with
 $\deg p\leq t$ satisfies property (R) automatically with positive constants
 $c_{1}$ and $c_{2}$ depending only on $d$.

\begin{defi}
  A sequence of $N$-point sets $X_{N}$,
  $X_{N}=\big\{\mathbf{x}_{1},\ldots, \mathbf{x}_{N} \big\}$, is called
  well-separated if there exists a positive constant $c_{3}$ such that
\begin{equation}\label{condiii}
  \min\limits_{i\neq j}|\mathbf{x}_{i}- \mathbf{x}_{j}|>
  \frac{c_{3}}{N^{\frac{1}{d}}}.
\end{equation}
\end{defi}
 
It should be noticed, that a well-separated sequence $X_{N}$ of numerical
integration rules with equal weights $\omega_{i}=\frac{1}{N}$ satisfies
property (R), but not conversely.  Indeed, from the inequality (\ref{condiii})
it follows, that for all ${\mathbf{x}_{i}, \mathbf{x}_{j} \in X_{N(t)}}$,
$i\neq j$,
\begin{equation*}
\langle\mathbf{x}_{i}, \mathbf{x}_{j}\rangle<1-\frac{c^{2}_{3}}{2N^{\frac{2}{d}}}.
\end{equation*}
Thus the spherical cap
$S\Big(\mathbf{x}_{i};\arccos\Big(1-\frac{c^{2}_{3}}{2N^{\frac{2}{d}}}\Big)\Big)$
contains no points of $X_{N}$ in its interior, except of the point
$\mathbf{x}_{i}$.

Using (\ref{capArea}) we deduce the following estimate
\begin{equation*}
\frac{1}{N}  \# \Big\{ \mathbf{x}_{j}\in X_{N(t)}
\cap S(\mathbf{x};\frac{c_{1}}{t})\Big\}
 \leq 
 \frac{1}{N}\frac{\Big| S(\mathbf{x};\frac{c_{1}}{t}) \Big|}
 {\Big|S\Big(\mathbf{x};\arccos\Big(1-\frac{c^{2}_{3}}
   {2N^{\frac{2}{d}}}\Big)\Big) \Big|}\ll
  \Big| S(\mathbf{x};\frac{c_{1}}{t}) \Big|.
\end{equation*}

 Here we write $a_{n}\ll b_{n}$ ($a_{n}\gg b_{n}$) to mean that there exists positive constant $K$  independent of $n$ such that 
 $a_{n}\leq  K b_{n}$ ($a_{n}\geq K b_{n}$) for all $n$.

\begin{thm}\label{thm1} Let $d\geq2$, $\gamma>\dfrac{1}{2}$ be fixed, and
  $(X_{N(t)})_t$ be a sequence  be a well-separated spherical $t$-designs, 
  $t$ and $N(t)$ satisfy relation (\ref{constantc0}).
Then there exist  positive constants 
$C_{d,\gamma}^{(1)} $ and $C_{d,\gamma}^{(2)}$, such that
 \begin{equation}\label{theorem1}
   C_{d,\gamma}^{(1)}N^{-\frac{1}{2}}\left(\ln N\right)^{-\gamma+\frac{1}{2}}\leq
   \mathrm{wce}(Q[X_{N}];\mathbb{H}^{(\frac{d}{2},\gamma)}(\mathbb{S}^{d}))
   \leq C_{d,\gamma}^{(2)}N^{-\frac{1}{2}}\left(\ln N\right)^{-\gamma+\frac{1}{2}}.
\end{equation}
The constants $C_{d,\gamma}^{(1)} $ and $C_{d,\gamma}^{(2)} $ depend only on
$d$, $\gamma$ and on the constants $c_{i}$, $i=1,\ldots,3$, from the relations
(\ref{propertyR}) and (\ref{condiii}).
\end{thm}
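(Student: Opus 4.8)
The plan is to work directly from the closed form \eqref{wceTdesign}. Writing the tail kernel $G(u):=\sum_{\ell=t+1}^{\infty}w_{\ell}(d,\gamma)^{-1}Z(d,\ell)P_{\ell}^{(d)}(u)$ and separating the diagonal terms $i=j$ (where $P_{\ell}^{(d)}(1)=1$) from the rest, \eqref{wceTdesign} becomes
\[
\mathrm{wce}(Q[X_{N}];\mathbb{H}^{(\frac{d}{2},\gamma)}(\mathbb{S}^{d}))^{2}
=\frac{1}{N}\,G(1)+\frac{1}{N^{2}}\sum_{i\neq j}G(\langle\mathbf{x}_{i},\mathbf{x}_{j}\rangle).
\]
The diagonal term is the main term and already produces the claimed order: by \eqref{Zd} and the definition of $w_{\ell}(d,\gamma)$ one has $w_{\ell}(d,\gamma)^{-1}Z(d,\ell)\asymp\ell^{-1}(\ln\ell)^{-2\gamma}$, so that, since $\gamma>\frac{1}{2}$, the tail sum converges and a comparison with $\int_{t}^{\infty}x^{-1}(\ln x)^{-2\gamma}\,dx$ gives $G(1)\asymp(\ln t)^{1-2\gamma}$. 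Combined with $N\asymp t^{d}$ (hence $\ln N\asymp\ln t$) from \eqref{constantc0}, this yields $\frac{1}{N}G(1)\asymp N^{-1}(\ln N)^{1-2\gamma}$, exactly the square of the target order in \eqref{theorem1}. Thus everything reduces to showing that the off-diagonal sum is dominated by this diagonal term.

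To control the off-diagonal sum I would first establish a pointwise estimate for $G(\cos\theta)$ away from $\theta=0$. Applying Abel summation to the series and using the Christoffel--Darboux identity \eqref{ChristoffelDarbu1} (and its iterate \eqref{ChristoffelDarbu2}) to rewrite the partial sums $\sum_{r\leq\ell}Z(d,r)P_{r}^{(d)}$ as single higher-index Jacobi polynomials $\frac{(d)_{\ell}}{(d/2)_{\ell}}P_{\ell}^{(\frac{d}{2},\frac{d}{2}-1)}$, the tail kernel is expressed through Jacobi polynomials whose size is governed by \eqref{JacobiIneq}--\eqref{JacobiIneq1}. Together with the coefficient decay $w_{\ell}(d,\gamma)^{-1}-w_{\ell+1}(d,\gamma)^{-1}\asymp\ell^{-d-1}(\ln\ell)^{-2\gamma}$, this should give $|G(\cos\theta)|\ll t^{-(d+1)/2}(\ln t)^{-2\gamma}(\sin\theta)^{-(d+1)/2}$ for $\theta\gg t^{-1}$, while for $\theta\ll t^{-1}$ one simply uses $|P_{\ell}^{(d)}|\leq1$ to get $|G(\cos\theta)|\leq G(1)\asymp(\ln t)^{1-2\gamma}$.

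By well-separation \eqref{condiii} every off-diagonal pair satisfies $\langle\mathbf{x}_{i},\mathbf{x}_{j}\rangle\leq1-c\,N^{-2/d}$, i.e. the angle $\theta_{ij}\gg t^{-1}$, and the disjoint-cap argument from \eqref{capArea} together with property (R), \eqref{propertyR}, bounds the number of nodes in each annulus $\{\theta_{ij}\in[\rho,2\rho]\}$ by $\ll\rho^{d}N$. The main obstacle is that inserting the pointwise kernel bound and summing absolute values is too lossy: it produces $t^{-(d+1)/2}(\ln t)^{-2\gamma}$, which exceeds the diagonal term $\asymp t^{-d}(\ln t)^{1-2\gamma}$ by a factor $t^{(d-1)/2}(\ln t)^{-1}$ since $\frac{d+1}{2}<d$ for $d\geq2$. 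Hence the sign oscillation of $G(\cos\theta)$ (the $\cos((\ell+\frac{d-1}{2})\theta-\ldots)$ factor in the Jacobi asymptotics) must be exploited rather than discarded. The device I would use is to read the inner sum $\frac{1}{N}\sum_{j}G(\langle\mathbf{x}_{i},\mathbf{x}_{j}\rangle)$ as the quadrature error of the mean-zero zonal function $\mathbf{y}\mapsto G(\langle\mathbf{x}_{i},\mathbf{y}\rangle)$ (whose integral vanishes because $G$ contains no degree-$0$ term), and to estimate this error by a dyadic frequency decomposition $G=\sum_{m}B_{m}$ into blocks $\ell\in(2^{m-1}t,2^{m}t]$, using the exactness of the $t$-design together with the localisation of each $B_{m}$ at scale $2^{-m}t^{-1}$ to recover precisely the extra factor $t^{(d-1)/2}$ needed to push the off-diagonal contribution below the diagonal.

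Once the off-diagonal sum is shown to be $o\bigl(N^{-1}(\ln N)^{1-2\gamma}\bigr)$ (or at worst a fixed fraction of the diagonal for $t$ large), both inequalities in \eqref{theorem1} follow at once: the upper bound from $\mathrm{wce}^{2}\leq\frac{1}{N}G(1)+|\text{off-diagonal}|$, and the lower bound from $\mathrm{wce}^{2}\geq\frac{1}{N}G(1)-|\text{off-diagonal}|\gg N^{-1}(\ln N)^{1-2\gamma}$, the finitely many small values of $t$ being absorbed into the constants. The constants then depend only on $d$, $\gamma$ and on $c_{1},c_{2},c_{3}$ through the annulus-counting and property (R) steps, as asserted. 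I expect the frequency-block quadrature estimate of the third paragraph—quantifying how much cancellation a well-separated $t$-design forces on the high-frequency tail—to be the technical heart of the argument, the diagonal computation and the Christoffel--Darboux/Jacobi kernel bound being comparatively routine.
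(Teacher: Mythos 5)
Your reduction to the identity
$\mathrm{wce}^{2}=\frac{1}{N}G(1)+\frac{1}{N^{2}}\sum_{i\neq j}G(\langle\mathbf{x}_{i},\mathbf{x}_{j}\rangle)$, the computation $G(1)\asymp(\ln t)^{1-2\gamma}$, and the observation that the one-step Abel/Christoffel--Darboux bound $|G(\cos\theta)|\ll t^{-(d+1)/2}(\ln t)^{-2\gamma}(\sin\theta)^{-(d+1)/2}$ is too lossy by a factor $t^{(d-1)/2}$ are all correct and match the skeleton of the paper's argument (its Lemma~\ref{lem3} has exactly this diagonal term, \eqref{diagonal}). But the step you yourself identify as ``the technical heart'' --- recovering the missing factor $t^{(d-1)/2}$ from oscillation via a dyadic frequency decomposition of $G$ and ``the exactness of the $t$-design'' --- is a genuine gap, and the specific mechanism you propose cannot work as stated. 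The design exactness applies only to polynomials of degree $\leq t$ and has already been spent in deriving \eqref{wceTdesign}: the tail $G$ contains only frequencies $\ell>t$, so the quadrature error of $\mathbf{y}\mapsto B_{m}(\langle\mathbf{x}_{i},\mathbf{y}\rangle)$ for a block $\ell\in(2^{m-1}t,2^{m}t]$ is not controlled by the design property at all. Worse, by \eqref{positKernel} each frequency contributes nonnegatively to the double sum, so there is no cancellation across $\ell$ to exploit there; any gain must come from the angular distribution of the points, which nothing in your hypotheses quantifies beyond separation and property (R).

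The paper closes this gap by a much more elementary device that you stopped one step short of: iterate the Abel summation $K$ times (any integer $K>\frac{d+1}{2}$) using \eqref{ChristoffelDarbu1}--\eqref{ChristoffelDarbu2}, converting the tail into a sum with coefficients $\asymp\ell^{-\frac{d}{2}-K}(\ln\ell)^{-2\gamma}$ against $P_{\ell}^{(\frac{d}{2}+K-1,\frac{d}{2}-1)}$ (Lemmas~\ref{lem1} and~\ref{lem2}; positivity \eqref{positKernel} makes each Abel step a one-sided inequality in the needed direction, and the boundary terms are handled by the design exactness via \eqref{a3}). After this lifting, the pointwise bound \eqref{JacobiIneq} gives $(\sin\theta)^{-\frac{d}{2}-K+\frac{1}{2}}$ with exponent exceeding $d$, so the off-diagonal sum is dominated by the \emph{nearest-neighbour} pairs rather than by $\theta\asymp1$; well-separation \eqref{condiii} gives $\sin\theta_{ij}\gg N^{-1/d}$ there, and the crude absolute-value estimate \eqref{for11} then yields $N^{-1}(\ln N)^{-2\gamma}$ --- a full logarithm below the diagonal, so no oscillation is needed. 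Two further points your sketch omits: near-antipodal pairs must be treated separately (the estimate \eqref{JacobiIneq} is stated for $\theta$ bounded away from $\pi$; the paper uses \eqref{JacobiMinus} plus the fact that each cap $S(-\mathbf{x}_{j};\alpha_{N})$ contains at most one node), and for the lower bound one must verify that the Abel boundary terms contribute only $O(t^{-d}(\ln t)^{-2\gamma})$, which again uses the design exactness.
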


%We formulate this as corollary below.

%\mathbf{Corollary 1.} {\it Let $d\geq2$, $\gamma>\dfrac{1}{2}$ be fixed, $Q[X_{N(t)}]$  is positive  weight  N(t)-point numerical integration rule with $Q[X_{N(t)}](p)=\mathrm{I}(p)$ for all $p\in\mathbb{P}_{n}(\mathbb{S}^{d})$ 
%and which satisfies condition (\ref{condiii}).
%Then there exists a positive constant 
%$C_{d,\gamma}^{(1)} $ such that
%\begin{equation}\label{corollary}
%\mathrm{wce}(Q[X_{N}];\mathbb{H}^{(\frac{d}{2},\gamma)}(\mathbb{S}^{d}))\leq C_{d,\gamma}^{(1)}N^{-1/2}\left(\ln N\right)^{-\gamma}.
%\end{equation}
%The constant $C_{d,\gamma}^{(1)} $ depends only on  $d$ and $\gamma$ and  on the constants $c_{0}$, $c_{4}$ from the relations (\ref{constantc0}) and (\ref{condiii}).}

In (\ref{theorem1}) and further in this section for brevity we write $N$
instead $N(t)$ for the number of nodes in $X_{N(t)}$.

Theorem~\ref{thm1} is a consequence of the following lemmas:

\begin{lemma}\label{lem1}
Let $d\geq2$ and $\gamma>\dfrac{1}{2}$  be fixed.
Then for any sequence $X_{N}$, $K\in \mathbb{N}_{0}$ and for any
$n\in\mathbb{N}$ the following relation holds
\begin{align}\label{Lemma1}
  &\frac{1}{N^{2}}\sum\limits_{i,j=1}^{N}\sum\limits_{\ell=n+1}^{\infty}
  w_{\ell}(d,\gamma)^{-1}Z(d,\ell)
  P_{\ell}^{(d)}(\langle\mathbf{x}_{i},\mathbf{x}_{j}\rangle)\\
  \ll
  &\frac{1}{N^{2}}\sum\limits_{i,j=1}^{N}\sum\limits_{\ell=n+1}^{\infty}
    \ell^{-\frac{d}{2}-K}(\ln \ell)^{-2\gamma}
P_{\ell}^{(\frac{d}{2}+K-1,\frac{d}{2}-1)}(\langle\mathbf{x}_{i},\mathbf{x}_{j}\rangle).\notag
\end{align}
\end{lemma}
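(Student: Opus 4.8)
The plan is to pass from the zonal kernel tail $\sum_{\ell>n}w_\ell(d,\gamma)^{-1}Z(d,\ell)P_\ell^{(d)}(u)$ (writing $u$ for the inner product $\langle\mathbf x_i,\mathbf x_j\rangle\in[-1,1]$) to a tail of Jacobi polynomials with first index $\frac d2+K-1$ by performing $K$ successive summations by parts, each raising the first Jacobi index by one unit by means of the Christoffel--Darboux-type identities \eqref{ChristoffelDarbu1} and \eqref{ChristoffelDarbu2}. First I would use \eqref{Jacobigegenbauer} to write $Z(d,\ell)P_\ell^{(d)}(u)=Z(d,\ell)\frac{\ell!}{(d/2)_\ell}P_\ell^{(\frac d2-1,\frac d2-1)}(u)$; by \eqref{Zd}, \eqref{JacobiMax} and the ratio asymptotics \eqref{gamma}, the coefficient $c_\ell^{(0)}:=w_\ell(d,\gamma)^{-1}Z(d,\ell)\frac{\ell!}{(d/2)_\ell}$ is positive and satisfies $c_\ell^{(0)}\asymp\ell^{-\frac d2}(\ln\ell)^{-2\gamma}$, which is exactly the claimed order at $K=0$.

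For the inductive step I would read \eqref{ChristoffelDarbu2} as a telescoping identity: with $S_\ell^{(k)}(u):=\frac{(d+k)_\ell}{(d/2)_\ell}P_\ell^{(\frac d2+k,\frac d2-1)}(u)$ and $A_\ell^{(k)}:=\frac{2\ell+d-1+k}{d-1+k}\frac{(d-1+k)_\ell}{(d/2)_\ell}$ it gives $A_\ell^{(k)}P_\ell^{(\frac d2-1+k,\frac d2-1)}(u)=S_\ell^{(k)}(u)-S_{\ell-1}^{(k)}(u)$ (the case $k=0$ being \eqref{ChristoffelDarbu1}). Substituting this into $\sum_{\ell>n}c_\ell^{(k)}P_\ell^{(\frac d2-1+k,\frac d2-1)}(u)$ and applying Abel summation with $b_\ell^{(k)}:=c_\ell^{(k)}/A_\ell^{(k)}$ produces a main sum $\sum_{\ell>n}(b_\ell^{(k)}-b_{\ell+1}^{(k)})S_\ell^{(k)}(u)$, a lower boundary term $-b_{n+1}^{(k)}S_n^{(k)}(u)$, and a boundary term at infinity. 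Since $A_\ell^{(k)}\asymp\ell^{\frac d2+k}$ and $\frac{(d+k)_\ell}{(d/2)_\ell}\asymp\ell^{\frac d2+k}$, one has $b_\ell^{(k)}\asymp\ell^{-d-2k}(\ln\ell)^{-2\gamma}$ and $S_\ell^{(k)}(1)\asymp\ell^{d+2k}$, so $b_\ell^{(k)}S_\ell^{(k)}(u)\to0$ uniformly (using $\gamma>0$) and the boundary term at infinity vanishes; the new coefficient $c_\ell^{(k+1)}\asymp(b_\ell^{(k)}-b_{\ell+1}^{(k)})\,\ell^{\frac d2+k}\asymp\ell^{-\frac d2-(k+1)}(\ln\ell)^{-2\gamma}$ has the right order. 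After $K$ rounds the main sum is precisely of the form on the right-hand side of \eqref{Lemma1}.

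The decisive point is the treatment of the lower boundary terms $-b_{n+1}^{(k)}S_n^{(k)}$ and the final coefficient comparison, both of which rest on the positivity $\frac1{N^2}\sum_{i,j}P_\ell^{(\frac d2+k-1,\frac d2-1)}(\langle\mathbf x_i,\mathbf x_j\rangle)\ge0$ for every configuration $X_N$. I would establish this by showing that each $P_\ell^{(\frac d2+k-1,\frac d2-1)}$ expands with nonnegative coefficients in the zonal basis $\{P_m^{(d)}\}_{m\le\ell}$: the base case $k=0$ is \eqref{Jacobigegenbauer}, and the inductive step follows from \eqref{ChristoffelDarbu2}, whose coefficients $A_r^{(k)}$ are positive. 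Combined with the addition theorem \eqref{additiontheorem}, which yields $\frac1{N^2}\sum_{i,j}Z(d,m)P_m^{(d)}(\langle\mathbf x_i,\mathbf x_j\rangle)=\frac1{N^2}\sum_{k'}\bigl|\sum_iY_{m,k'}^{(d)}(\mathbf x_i)\bigr|^2\ge0$, this gives the required nonnegativity. Since $S_n^{(k)}$ is a positive multiple of such a Jacobi polynomial and $b_{n+1}^{(k)}>0$, each lower boundary term is $\le0$ after summation over $i,j$ and may be discarded, and the same nonnegativity lets the termwise bound $c_\ell^{(K)}\ll\ell^{-\frac d2-K}(\ln\ell)^{-2\gamma}$ pass to the double sum.

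The main obstacle I anticipate is the coefficient bookkeeping rather than the positivity: one must verify that the explicit sequences $b_\ell^{(k)}$, which are ratios of Gamma functions times $(\ln(3+\lambda_\ell))^{-2\gamma}$, are eventually monotone and that their forward differences $b_\ell^{(k)}-b_{\ell+1}^{(k)}$ are nonnegative and of order $\ell^{-d-2k-1}(\ln\ell)^{-2\gamma}$. Differencing the power part supplies the leading $\ell^{-1}$ gain, whereas differencing the logarithmic weight only produces a factor $(\ln\ell)^{-1}$ smaller and so does not change the order; the delicate part is controlling these differences for small $\ell$ (where $n$ is arbitrary) so that the sign is correct and the induction iterates cleanly, which I would handle by absorbing finitely many low-index terms into the implied constants and invoking the eventual monotonicity of $\ell\mapsto\ell^{-a}(\ln\ell)^{-2\gamma}$.
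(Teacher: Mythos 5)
Your proposal is correct and follows essentially the same route as the paper: $K$ iterated Abel summations driven by the telescoping identities \eqref{ChristoffelDarbu1} and \eqref{ChristoffelDarbu2}, the positivity of $\sum_{i,j}P_{\ell}^{(\frac{d}{2}+k-1,\frac{d}{2}-1)}(\langle\mathbf{x}_{i},\mathbf{x}_{j}\rangle)$ (the paper's \eqref{positKernel}, justified exactly as you do via positive coefficients and positive definiteness of $P_{\ell}^{(d)}$) to discard the boundary terms, and Gamma-function asymptotics for the coefficient orders. Your treatment is in fact slightly more careful than the paper's on two points it leaves implicit, namely the vanishing of the boundary term at infinity (which decays only like $(\ln\ell)^{-2\gamma}$) and the sign and order of the forward differences at each stage.
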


\begin{lemma}\label{lem2} Let $d\geq2$ and $\gamma>\dfrac{1}{2}$  be fixed, let
  $(X_{N(t)})_t$ be a sequence of spherical $t$-designs, $t$ and $N(t)$ satisfy
  relation (\ref{constantc0}).  Then for any $K\in \mathbb{N}_{0}$ there exists
  a positive constant $C_{d,\gamma} $, such that
\begin{align}
&\frac{1}{N^{2}}\sum\limits_{i,j=1}^{N}\sum\limits_{\ell=t+1}^{\infty} \ell^{-\frac{d}{2}-K}(\ln \ell)^{-2\gamma}
 P_{\ell}^{(\frac{d}{2}+K-1, \frac{d}{2}-1)}(\langle\mathbf{x}_{i},\mathbf{x}_{j}\rangle)-
C_{d,\gamma} t^{-d}(\ln t)^{-2\gamma}\notag\\
&\ll\mathrm{wce}(Q[X_{N}];\mathbb{H}^{(\frac{d}{2},\gamma)})^{2}\label{Lemma2}\\
&\ll\frac{1}{N^{2}}\sum\limits_{i,j=1}^{N}\sum\limits_{\ell=t+1}^{\infty} \ell^{-\frac{d}{2}-K}(\ln \ell)^{-2\gamma}
 P_{\ell}^{(\frac{d}{2}+K-1, \frac{d}{2}-1)}(\langle\mathbf{x}_{i},\mathbf{x}_{j}\rangle).\notag
\end{align}
The constant $C_{d,\gamma} $  depends only on  $d$ and $\gamma$.
\end{lemma}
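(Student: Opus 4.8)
The plan is to reduce everything to one algebraic rearrangement of the triple sum on the right-hand side of (\ref{Lemma2}), which I abbreviate by $B$. The upper bound is in fact immediate: combining the error formula (\ref{wceTdesign}) with Lemma~\ref{lem1} at $n=t$ gives at once $\mathrm{wce}(Q[X_N];\mathbb H^{(\frac d2,\gamma)})^2\ll B$. So the substance lies in the lower bound, which I would obtain by showing that $B$ equals the worst-case error squared, up to a multiplicative constant, plus exactly the advertised remainder of size $t^{-d}(\ln t)^{-2\gamma}$; the decisive feature making this possible is that after averaging over the design every surviving contribution will be manifestly nonnegative.

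The engine is the Christoffel--Darboux-type identities (\ref{ChristoffelDarbu1}) and (\ref{ChristoffelDarbu2}). Reading (\ref{ChristoffelDarbu1}) as the statement that $P_\ell^{(\frac d2,\frac d2-1)}$ is a partial sum of the building blocks $Z(d,r)P_r^{(d)}$, and iterating (\ref{ChristoffelDarbu2}) $K$ times, I can express $P_\ell^{(\frac d2+K-1,\frac d2-1)}(x)$ as a $K$-fold weighted partial sum of the sequence $\big(Z(d,r)P_r^{(d)}(x)\big)_r$. Substituting this into $B$ and interchanging the order of summation --- legitimate because the resulting tail coefficients decay like $r^{-d}(\ln r)^{-2\gamma}$ and so are absolutely summable against the bounded $P_r^{(d)}$ --- produces a representation
\[
 B=\frac1{N^2}\sum_{i,j=1}^N\sum_{r=0}^\infty\rho_r\,Z(d,r)P_r^{(d)}(\langle\mathbf x_i,\mathbf x_j\rangle),
\]
in which each $\rho_r$ is an explicit iterated tail sum of $\ell^{-\frac d2-K}(\ln\ell)^{-2\gamma}$ against Pochhammer weights. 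Using the ratio asymptotics (\ref{gamma}) together with the elementary estimate $\sum_{\ell\ge r}\ell^{-a}(\ln\ell)^{-2\gamma}\asymp r^{1-a}(\ln r)^{-2\gamma}$ for $a>1$, I expect to verify that $\rho_r\asymp w_r(d,\gamma)^{-1}\asymp r^{-d}(\ln r)^{-2\gamma}$ for every $r>t$, while the single coefficient $\rho_0\asymp t^{-d}(\ln t)^{-2\gamma}$.

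Two structural facts then close the argument. By the addition theorem (\ref{additiontheorem}) the averaged building blocks
\[
 M_r:=\frac1{N^2}\sum_{i,j=1}^N Z(d,r)P_r^{(d)}(\langle\mathbf x_i,\mathbf x_j\rangle)=\sum_{k=1}^{Z(d,r)}\Big(\tfrac1N\sum_{i=1}^N Y^{(d)}_{r,k}(\mathbf x_i)\Big)^2
\]
are nonnegative, and by the defining property of a spherical $t$-design $M_r=0$ for $1\le r\le t$ whereas $M_0=1$. Consequently all terms with $1\le r\le t$ drop out and $B=\rho_0+\sum_{r>t}\rho_r M_r$. Since every $M_r\ge0$ and $\rho_r\asymp w_r(d,\gamma)^{-1}$ on $r>t$, the surviving series is comparable to $\sum_{r>t}w_r(d,\gamma)^{-1}M_r$, which by (\ref{wceTdesign}) and the same addition theorem equals precisely $\mathrm{wce}(Q[X_N];\mathbb H^{(\frac d2,\gamma)})^2$. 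Thus $B-\rho_0\asymp\mathrm{wce}(Q[X_N];\mathbb H^{(\frac d2,\gamma)})^2$ with $\rho_0\asymp t^{-d}(\ln t)^{-2\gamma}$, which yields the lower bound in (\ref{Lemma2}) and simultaneously recovers the upper one.

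The clean part is the nonnegativity-plus-$t$-design collapse; the real obstacle is the uniform coefficient bookkeeping $\rho_r\asymp w_r(d,\gamma)^{-1}$ and $\rho_0\asymp t^{-d}(\ln t)^{-2\gamma}$. Propagating the logarithmic weight $(\ln\ell)^{-2\gamma}$ through $K$ successive weighted tail summations, and checking that its exponent is preserved while the Pochhammer ratios lower the polynomial power by exactly $K$, is the delicate computation; this is also where the hypothesis $\gamma>\tfrac12$ enters, namely to guarantee convergence of the tails defining $\rho_0$ and the comparison on $r>t$.
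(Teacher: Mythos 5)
Your argument is correct, and it rests on the same ingredients as the paper's proof (the Christoffel--Darboux identities (\ref{ChristoffelDarbu1})--(\ref{ChristoffelDarbu2}), positivity of the averaged modes as in (\ref{positKernel}), the design's exactness up to degree $t$, and the Pochhammer asymptotics (\ref{gamma})), but you run the algebra in the opposite direction, which changes the shape of the proof in a genuine way. The paper starts from $\mathrm{wce}(Q[X_N];\mathbb{H}^{(\frac d2,\gamma)})^{2}$ in the form (\ref{wceTdesign}) and performs $K$ forward Abel summations; each step discards a nonnegative tail (giving the lower bound (\ref{f118})) and leaves a boundary term at $\ell=t$, which is then evaluated by using the design to replace the point average by the integral (\ref{f119}) and invoking the explicit formula (\ref{a3}), so the remainder $t^{-d}(\ln t)^{-2\gamma}$ appears as a sum of $K$ boundary contributions. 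You instead expand $B$ back into the basis $Z(d,r)P_r^{(d)}$, interchange sums (justified, as you note, by absolute convergence for $\gamma>\frac12$), and let the design property annihilate the modes $1\le r\le t$, so that $B=\rho_0+\sum_{r>t}\rho_r M_r$ with the entire remainder concentrated in the single coefficient $\rho_0$. What your route buys is that both inequalities of (\ref{Lemma2}) drop out of one identity ($\mathrm{wce}^2\asymp B-\rho_0$), and formula (\ref{a3}) is not needed at all; what it costs is that you must establish the uniform two-sided asymptotics $\rho_r\asymp w_r(d,\gamma)^{-1}$ for $r>t$ and $\rho_0\asymp t^{-d}(\ln t)^{-2\gamma}$ for the $K$-fold iterated tail sums, with constants independent of $t$ --- precisely the bookkeeping you flag as delicate. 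That computation does go through (each tail summation of $\ell^{-a}(\ln\ell)^{-2\gamma}$ with $a>1$ against a Pochhammer ratio of order $\ell^{-\frac d2-k}$ lowers the power by one while preserving the logarithm, and the chains passing through $r_1\le t$ contribute $\asymp t^{-d}(\ln t)^{-2\gamma}$ to $\rho_0$), so there is no gap; the paper's forward Abel summation simply avoids it by only ever needing one-sided bounds plus the explicit boundary evaluation.
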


\begin{lemma}\label{lem3} Let $d\geq2$ and  $\gamma>\dfrac{1}{2}$ 
  be fixed, $(X_{N(t)})_{t}$ be a well-separated sequence, $t$ and $N(t)$
  satisfy relation (\ref{constantc0}).  Then for any $K>\frac{d}{2}$,
  $K\in \mathbb{N}$, there exist positive constants $C_{d,\gamma}^{(1)} $ and
  $C_{d,\gamma}^{(2)}$, such that
\begin{align}\label{Lemma3}
&C_{d,\gamma}^{(1)}N^{-1}\left(\ln N\right)^{-2\gamma+1}\\
&\leq\frac{1}{N^{2}}\sum\limits_{i,j=1}^{N}\sum\limits_{\ell=t+1}^{\infty} \ell^{-\frac{d}{2}-K}(\ln \ell)^{-2\gamma}
 P_{\ell}^{(\frac{d}{2}+K-1, \frac{d}{2}-1)}(\langle\mathbf{x}_{i},\mathbf{x}_{j}\rangle)
\leq C_{d,\gamma}^{(2)} N^{-1}\left(\ln N\right)^{-2\gamma+1}.
\notag
\end{align}
The constants 
$C_{d,\gamma}^{(1)} $ and $C_{d,\gamma}^{(2)}$ depend only on  $d$ and
$\gamma$.
\end{lemma}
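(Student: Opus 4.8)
The plan is to estimate the double sum on the right of \eqref{Lemma3}, which I denote by $S$, by separating the diagonal part $S_{=}$ (terms $i=j$) from the off-diagonal part $S_{\neq}$ (terms $i\neq j$). The diagonal alone already produces the asserted two-sided order, while the off-diagonal will be shown to be of the same size or smaller. For $i=j$ one has $\langle\mathbf{x}_i,\mathbf{x}_j\rangle=1$, and by the normalisation \eqref{JacobiMax}, $P_\ell^{(\frac d2+K-1,\frac d2-1)}(1)\asymp\ell^{\frac d2+K-1}$; hence
\[
S_{=}=\frac1N\sum_{\ell=t+1}^\infty \ell^{-\frac d2-K}(\ln\ell)^{-2\gamma}\,\ell^{\frac d2+K-1}
=\frac1N\sum_{\ell=t+1}^\infty \ell^{-1}(\ln\ell)^{-2\gamma}\asymp\frac1N(\ln t)^{-2\gamma+1},
\]
the last step by integral comparison, convergent because $2\gamma>1$. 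Since $N\asymp t^d$ by \eqref{constantc0} we have $\ln t\asymp\ln N$, so $S_{=}\asymp N^{-1}(\ln N)^{-2\gamma+1}$ with explicit constants on both sides; this is the main term.

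For the off-diagonal I write $\langle\mathbf{x}_i,\mathbf{x}_j\rangle=\cos\theta_{ij}$; by well-separation \eqref{condiii} together with $N\asymp t^d$ one has $\theta_{ij}\gg t^{-1}$, so the Jacobi estimates \eqref{JacobiIneq}, \eqref{JacobiIneq1} apply (the range near $\theta=\pi$ being reduced to the milder range near $0$, with interchanged indices, via \eqref{JacobiMinus}). Summing first in $\ell$, each pair contributes $\ll(\sin\theta_{ij})^{-\frac d2-K+\frac12}\,t^{-\frac d2-K+\frac12}(\ln t)^{-2\gamma}$ up to lower order terms. I then group the pairs into dyadic angular annuli $\theta_{ij}\asymp 2^k/t$ for $0\le k\lesssim\log t$; the separation bound \eqref{capArea}, \eqref{condiii} limits the number of $\mathbf{x}_j$ in such an annulus about a fixed $\mathbf{x}_i$ to $\ll 2^{kd}$. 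The geometric sum in $k$ then bounds the contribution of each scale to $S$ by $\ll N^{-1}(\ln t)^{-2\gamma}2^{k(\frac d2-K+\frac12)}$. Since $\frac d2-K+\frac12\le0$ for every integer $K>\frac d2$ and there are at most $\asymp\log t$ scales, this yields $|S_{\neq}|\ll N^{-1}(\ln N)^{-2\gamma+1}$ and hence the upper bound $S=S_{=}+S_{\neq}\ll N^{-1}(\ln N)^{-2\gamma+1}$.

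The main obstacle is the lower bound. In the critical case $d$ odd and $K=\frac{d+1}2$ the exponent $\frac d2-K+\frac12$ vanishes, so every scale contributes equally, the crude estimate gives only $|S_{\neq}|\asymp N^{-1}(\ln N)^{-2\gamma+1}$ (the same order as the diagonal), and absolute values cannot rule out cancellation. To recover the missing logarithmic margin I would carry out one Abel summation in $\ell$ by means of the Christoffel--Darboux identity \eqref{ChristoffelDarbu2}: this rewrites $\sum_\ell \ell^{-\frac d2-K}(\ln\ell)^{-2\gamma}P_\ell^{(\frac d2+K-1,\frac d2-1)}$ as a sum $\sum_\ell b_\ell\,P_\ell^{(\frac d2+K,\frac d2-1)}$ with $b_\ell\asymp\ell^{-\frac d2-K-1}(\ln\ell)^{-2\gamma}$ (one extra power of decay) plus a single boundary term at $\ell\asymp t$. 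Repeating the dyadic count for the new sum, the scale exponent improves to $\frac d2-K-\frac12<0$ for all $K>\frac d2$, so the sum over $k$ is genuinely geometric and $|S_{\neq}|\ll N^{-1}(\ln t)^{-2\gamma}=o\!\big(N^{-1}(\ln N)^{-2\gamma+1}\big)$; the boundary term is handled identically and is likewise $\asymp N^{-1}(\ln N)^{-2\gamma}$. Therefore $S\ge S_{=}-|S_{\neq}|\ge\frac12\,S_{=}\gg N^{-1}(\ln N)^{-2\gamma+1}$ for all large $t$, with the finitely many small $t$ absorbed into the constants. The residual bookkeeping — the antipodal region through \eqref{JacobiMinus} and the verification that the Abel boundary term is harmless — is routine.
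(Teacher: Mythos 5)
Your outline is correct and shares the paper's skeleton: both proofs isolate the diagonal $i=j$ as the main term, computed identically from \eqref{JacobiMax} to be $\asymp N^{-1}(\ln N)^{-2\gamma+1}$ (compare \eqref{diagonal}), and then show the off-diagonal part is dominated. The difference lies in how the off-diagonal sum is controlled. The paper splits off the antipodal caps $S(-\mathbf{x}_j;\alpha_N)$ (at most one point each, handled via \eqref{JacobiMinus} and \eqref{JacobiIneq1} as in \eqref{estimMinus}) and bounds the remainder by reducing, via \eqref{JacobiIneq}, to the sums $\frac{1}{N^2}\sum_{i,j}(\sin\theta_{ij}^{\pm})^{-\frac d2-K\pm\frac12}$, which it estimates by citing Brauchart--Hesse \eqref{BrauchartHesse}; your dyadic annulus count from well-separation is a self-contained reproof of essentially that estimate, so the two routes buy the same thing. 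The genuinely new content in your proposal is the critical case: the paper's proof explicitly chooses $K>\frac{d+1}{2}$ at \eqref{for11}, so as written it does not cover the integer $K=\frac{d+1}{2}$ (admissible for odd $d$ under the stated hypothesis $K>\frac d2$), precisely where your scale exponent $\frac d2-K+\frac12$ vanishes. Your additional Abel summation via \eqref{ChristoffelDarbu2}, which gains one power of decay in the coefficients at the cost of a boundary term of order $N^{-1}(\ln N)^{-2\gamma}$, repairs this and yields the lemma for every integer $K>\frac d2$ as formulated; for the application in Theorem~\ref{thm1} the distinction is immaterial since $K$ may be taken large, but your argument is the one that proves the statement as written. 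The bookkeeping you defer (antipodal scales with interchanged Jacobi indices, harmlessness of the Abel boundary term) does check out.
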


\begin{remark}\label{rem1} Let $d\geq2$, $\gamma>\dfrac{1}{2}$ be fixed and let
  the sequence $(X_{N})_N$ have property (R).  Then
\begin{equation}\label{Remark1}
\frac{1}{N^{2}}\sum\limits_{i,j=1}^{N}\sum\limits_{\ell=[N^{\frac{1}{d}}]+1}^{\infty}\ell^{-\frac{d}{2}-K}(\ln \ell)^{-2\gamma}
 P_{\ell}^{(\frac{d}{2}+K-1, \frac{d}{2}-1)}(\langle\mathbf{x}_{i},\mathbf{x}_{j}\rangle)
 \ll N^{-1}\left(\ln N\right)^{-2\gamma+1}.
\end{equation}
\end{remark}

Lemma~\ref{lem1} and Remark~\ref{rem1} allow us to write down the following
estimate.

\begin{thm}\label{thm2} Let $d\geq2$, $\gamma>\dfrac{1}{2}$ be fixed and let
  the sequence $(X_{N})_N$ have property (R).
  Then
\begin{align}\label{theorem2}
  &\mathrm{wce}(Q[X_{N}];\mathbb{H}^{(\frac{d}{2},\gamma)}(\mathbb{S}^{d}))^{2}
    \notag\\
=&\frac{1}{N^{2}}\sum\limits_{i,j=1}^{N}\sum\limits_{\ell=1}^{[N^{\frac{1}{d}}]} w_{\ell}(d,\gamma)^{-1}Z(d,\ell)
 P_{\ell}^{(d)}(\langle\mathbf{x}_{i},\mathbf{x}_{j}\rangle)+\mathcal{O}\Big(\frac{1}{N (\ln N)^{2\gamma-1}}\Big).\notag
\end{align}
\end{thm}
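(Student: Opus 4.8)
The plan is to insert the equal-weight rule $\omega_i=1/N$ into the kernel representation \eqref{wceKernel}--\eqref{kernel0}, so that
\begin{equation*}
  \mathrm{wce}(Q[X_{N}];\mathbb{H}^{(\frac{d}{2},\gamma)}(\mathbb{S}^{d}))^{2}
  =\frac{1}{N^{2}}\sum_{i,j=1}^{N}\sum_{\ell=1}^{\infty}
  w_{\ell}(d,\gamma)^{-1}Z(d,\ell)P_{\ell}^{(d)}(\langle\mathbf{x}_{i},\mathbf{x}_{j}\rangle),
\end{equation*}
and then to split the $\ell$-sum at $[N^{1/d}]$. The part $\sum_{\ell=1}^{[N^{1/d}]}$ is precisely the main term displayed in \eqref{theorem2}, so it remains only to show that the tail
\begin{equation*}
  T:=\frac{1}{N^{2}}\sum_{i,j=1}^{N}\sum_{\ell=[N^{1/d}]+1}^{\infty}
  w_{\ell}(d,\gamma)^{-1}Z(d,\ell)P_{\ell}^{(d)}(\langle\mathbf{x}_{i},\mathbf{x}_{j}\rangle)
\end{equation*}
satisfies $T=\mathcal{O}\bigl(N^{-1}(\ln N)^{-2\gamma+1}\bigr)$.

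The key observation, which turns the one-sided bounds of Lemma~\ref{lem1} and Remark~\ref{rem1} into a genuine $\mathcal{O}$-estimate, is that $T$ is nonnegative. Applying the addition theorem \eqref{additiontheorem} to each zonal summand, the double sum over $i,j$ factors as a square and yields
\begin{equation*}
  T=\sum_{\ell=[N^{1/d}]+1}^{\infty}w_{\ell}(d,\gamma)^{-1}
  \sum_{k=1}^{Z(d,\ell)}\Bigl(\frac{1}{N}\sum_{i=1}^{N}Y_{\ell,k}^{(d)}(\mathbf{x}_{i})\Bigr)^{2}\geq 0,
\end{equation*}
since every coefficient $w_{\ell}(d,\gamma)^{-1}$ is positive. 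This lower bound holds for an arbitrary configuration $X_N$, independently of property (R).

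For the matching upper bound I would fix an admissible $K$ (large enough that Remark~\ref{rem1} applies) and chain the two hypotheses with the common cut-off $n=[N^{1/d}]$: Lemma~\ref{lem1} bounds $T$ above by the Jacobi series $\frac{1}{N^{2}}\sum_{i,j}\sum_{\ell=[N^{1/d}]+1}^{\infty}\ell^{-\frac{d}{2}-K}(\ln\ell)^{-2\gamma}P_{\ell}^{(\frac{d}{2}+K-1,\frac{d}{2}-1)}(\langle\mathbf{x}_{i},\mathbf{x}_{j}\rangle)$, and Remark~\ref{rem1}, which uses property (R), bounds that same series by $\ll N^{-1}(\ln N)^{-2\gamma+1}$. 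Composing, $0\leq T\ll N^{-1}(\ln N)^{-2\gamma+1}$, which is the asserted remainder $\mathcal{O}\bigl(1/(N(\ln N)^{2\gamma-1})\bigr)$. Since Lemma~\ref{lem1} and Remark~\ref{rem1} are taken as given, the only substantive step is the positivity of $T$ from the addition theorem; the sole point requiring care is that the cut-off $n=[N^{1/d}]$ in Lemma~\ref{lem1} coincides with the lower summation limit in Remark~\ref{rem1}, so that the two upper bounds compose without an intervening error.
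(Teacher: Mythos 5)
Your proposal is correct and follows essentially the same route as the paper: the paper obtains Theorem~\ref{thm2} precisely by splitting the kernel representation \eqref{wceKernel} at $\ell=[N^{1/d}]$ and bounding the tail by chaining Lemma~\ref{lem1} (with $n=[N^{1/d}]$) and Remark~\ref{rem1}. Your positivity observation for the tail via the addition theorem is exactly the paper's inequality \eqref{positKernel} (Schoenberg positive definiteness), and it is indeed the step needed to upgrade the one-sided $\ll$ bounds to the stated $\mathcal{O}$-remainder.
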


From the proofs of Lemmas 1--3 one can easily get an estimate.

\begin{thm}\label{thm3} Let $d\geq2$, $\gamma>\dfrac{1}{2}$ be fixed and let
  $(X_{N(t)})_t$ be a sequence of spherical $t$-designs.
Then there exists  a positive constant
 $C_{d,\gamma}$ such that
 \begin{equation}\label{theorem3}
\mathrm{wce}(Q[X_{N}];\mathbb{H}^{(\frac{d}{2},\gamma)}(\mathbb{S}^{d}))\leq C_{d,\gamma}t^{-\frac{d}{2}}\left(\ln t\right)^{-\gamma+\frac{1}{2}}.
\end{equation}
The constant  $C_{d,\gamma} $ depends only on  $d$ and $\gamma$.
\end{thm}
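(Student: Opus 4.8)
The plan is to obtain \eqref{theorem3} as the upper half of the chain already built for Theorem~\ref{thm1}, but with the well-separation hypothesis replaced by the regularity that every $t$-design enjoys automatically. Since $(X_{N(t)})_t$ consists of $t$-designs, the equal-weight rule $Q[X_N]$ is exact on polynomials of degree $\le t$, so \eqref{wceTdesign} is available; applying Lemma~\ref{lem1} with $n=t$ gives, for any fixed $K\in\mathbb{N}$,
\begin{equation*}
\mathrm{wce}(Q[X_N];\mathbb{H}^{(\frac{d}{2},\gamma)}(\mathbb{S}^d))^2\ll\frac{1}{N^2}\sum_{i,j=1}^N\sum_{\ell=t+1}^\infty\ell^{-\frac{d}{2}-K}(\ln\ell)^{-2\gamma}P_\ell^{(\frac{d}{2}+K-1,\frac{d}{2}-1)}(\langle\mathbf{x}_i,\mathbf{x}_j\rangle)=:S .
\end{equation*}
Thus it suffices to prove $S\ll t^{-d}(\ln t)^{-2\gamma+1}$ and take square roots.

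The decisive point is that a spherical $t$-design is a positive (equal) weight cubature rule exact up to degree $t$, so by Reimer's theorem quoted after \eqref{propertyR} the sequence has property (R) with constants depending only on $d$. Property (R) is precisely the hypothesis of Remark~\ref{rem1}, and inspection of the proof of Lemma~\ref{lem3} shows that its \emph{upper} estimate uses only property (R) together with $N\asymp t^d$ from \eqref{constantc0}; well-separation \eqref{condiii} is invoked only for the matching lower bound. I would therefore reuse that upper estimate to get $S\ll N^{-1}(\ln N)^{-2\gamma+1}$, and then, since $N\asymp t^d$ forces $\ln N\asymp\ln t$, conclude $S\ll t^{-d}(\ln t)^{-2\gamma+1}$, which is \eqref{theorem3}.

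Concretely, the estimate on $S$ splits into a diagonal and an off-diagonal part. On the diagonal $\langle\mathbf{x}_i,\mathbf{x}_i\rangle=1$, and \eqref{JacobiMax} gives $P_\ell^{(\frac{d}{2}+K-1,\frac{d}{2}-1)}(1)\asymp\ell^{\frac{d}{2}+K-1}$, so the diagonal contributes $\frac{1}{N}\sum_{\ell>t}\ell^{-1}(\ln\ell)^{-2\gamma}\asymp N^{-1}(\ln t)^{-2\gamma+1}$, which is $\ll t^{-d}(\ln t)^{-2\gamma+1}$ because $N\ge C_dt^{d}$ for any $t$-design. For the off-diagonal sum I would bound the inner kernel $g(\cos\theta)=\sum_{\ell>t}\ell^{-\frac{d}{2}-K}(\ln\ell)^{-2\gamma}P_\ell^{(\frac{d}{2}+K-1,\frac{d}{2}-1)}(\cos\theta)$ by means of the Jacobi asymptotics \eqref{JacobiIneq}, \eqref{JacobiIneq1}; taking $K$ large enough makes the resulting exponent of $\sin\theta$ negative, so that $|g(\cos\theta)|\ll(t\sin\theta)^{-(\frac{d}{2}+K-\frac12)}(\ln t)^{-2\gamma}$ for $\theta\ge c/t$, while $|g(\cos\theta)|\ll(\ln t)^{-2\gamma+1}$ for $\theta<c/t$. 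Property (R) controls both pieces: it bounds the number of nodes in any cap $S(\mathbf{x};c_1/t)$ by a constant, and by covering larger caps it yields the area-regular count $\#\{\mathbf{x}_j\in S(\mathbf{x};\varphi)\}\ll N\varphi^{d}$, which lets me sum the decaying bound over dyadic annuli. The main obstacle is exactly this off-diagonal step: I must verify that property (R), rather than the stronger separation \eqref{condiii}, already suffices to absorb the clustered pairs with $\theta_{ij}<c/t$ and to sum the tail $\theta_{ij}\ge c/t$ to $\ll t^{-d}(\ln t)^{-2\gamma+1}$. A secondary, routine point is reconciling the threshold $\ell>t$ in $S$ with the threshold $\ell>[N^{1/d}]$ in Remark~\ref{rem1}; since $N\asymp t^d$ these agree up to a constant factor, and the intervening frequencies $\ell\asymp t$ are absorbed by the same diagonal-plus-property-(R) estimate.
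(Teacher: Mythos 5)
Your strategy is the one the paper intends: Theorem~\ref{thm3} is given there with only the remark that it follows ``from the proofs of Lemmas 1--3'', and your reconstruction --- \eqref{wceTdesign} and Lemma~\ref{lem1} with $n=t$, Reimer's theorem to obtain property (R) for free for any $t$-design, then the upper half of the Lemma~\ref{lem3} computation with the property-(R) estimate \eqref{BrauchartHesse} doing the work --- is exactly how those proofs combine. Two points of care. First, your reading of Lemma~\ref{lem3} is not quite accurate: well-separation \emph{is} used in its upper estimate, namely to guarantee that the caps $S(\pm\mathbf{x}_j;\alpha_N)$ contain at most one node; so the near-diagonal and near-antipodal clusters genuinely must be re-done under property (R) alone. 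You correctly single this out as the obstacle, and your fix (cap counts from \eqref{propertyR} combined with \eqref{JacobiMax}, \eqref{JacobiMinus}, \eqref{JacobiIneq1}) is the right one. Second, and more substantively, Theorem~\ref{thm3} does \emph{not} assume \eqref{constantc0}: the bound is stated in terms of $t$ precisely because the sequence of $t$-designs is arbitrary, so the only size information available is the Delsarte--Goethals--Seidel inequality $N\geq C_dt^d$, not $N\asymp t^d$. This invalidates your claim that property (R) bounds the number of nodes in a cap $S(\mathbf{x};c_1/t)$ by a constant; the correct count is $\ll Nt^{-d}$. Fortunately the argument survives, because every estimate closes with the powers of $N$ cancelling: the clustered pairs contribute $\ll N^{-2}\cdot N\cdot Nt^{-d}\cdot(\ln t)^{1-2\gamma}=t^{-d}(\ln t)^{1-2\gamma}$; the diagonal contributes $N^{-1}(\ln t)^{1-2\gamma}\ll t^{-d}(\ln t)^{1-2\gamma}$ by $N\geq C_dt^d$; and each dyadic annulus in the off-diagonal sum contributes $\ll N^{-2}\cdot N\cdot N\varphi^{d}\cdot(t\varphi)^{-\frac{d}{2}-K+\frac12}(\ln t)^{-2\gamma}$, which is independent of $N$ and sums to $\ll t^{-d}(\ln t)^{-2\gamma}$ for $K$ large. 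With these repairs (and working directly with the threshold $\ell>t$ rather than routing through Remark~\ref{rem1}, whose threshold $[N^{1/d}]$ need not be comparable to $t$ here), your proof is complete and coincides with the paper's intended argument.
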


\begin{proof}[Proof of  Lemma  1]
We write  \begin{equation*}
 \Delta a_{\ell}:=a_{\ell}-a_{\ell+1}.
 \end{equation*}

For all $K\in \mathbb{N}_{0}$ denote by $a_{\ell}^{(K)}$ the following quantity
\begin{equation}\label{alK}
a_{\ell}^{(K)}=a_{\ell}^{(K)}(\gamma,d):=\left(1+\lambda_{\ell}\right)^{-\frac{d}{2}-K}\left(\ln\left(3+\lambda_{\ell}\right)\right)^{-2\gamma}.
\end{equation}

An application of Abel summation yields
\begin{align}
&\frac{1}{N^{2}}\sum\limits_{i,j=1}^{N}\sum\limits_{\ell=n+1}^{\infty}a_{\ell}^{(0)}Z(d,\ell)
  P_{\ell}^{(d)}(\langle\mathbf{x}_{i},\mathbf{x}_{j}\rangle)\notag\\
  =&\frac{1}{N^{2}}\sum\limits_{i,j=1}^{N}\sum\limits_{\ell=n+1}^{\infty}\Delta a_{\ell}^{(0)}\sum\limits_{k=0}^{\ell}Z(d,k)
 P_{k}^{(d)}(\langle\mathbf{x}_{i},\mathbf{x}_{j}\rangle)\label{for112}\\
 -&a_{n+1}^{(0)}\sum\limits_{k=0}^{n}Z(d,k)
 \frac{1}{N^{2}}\sum\limits_{i,j=1}^{N}P_{k}^{(d)}(\langle\mathbf{x}_{i},\mathbf{x}_{j}\rangle).\notag
 \end{align}
  
Here and further we use  that for all $k,\ell\in\mathbb{N}_{0}$
 \begin{equation}\label{positKernel}
\sum\limits_{i,j=1}^{N}
P_{\ell}^{(\frac{d}{2}-1+k, \frac{d}{2}-1)}(\langle\mathbf{x}_{i},\mathbf{x}_{j}\rangle)\geq 0,
\end{equation}
which follows from the fact that all coefficients in \eqref{ChristoffelDarbu1}
and \eqref{ChristoffelDarbu2} are positive and the fact that $P_{\ell}^{(d)}$
is a positive definite function in the sense of Schoenberg
\cite{Schoenberg1942PositiveDefinite}.

From (\ref{for112}) we obtain the following upper estimate
 \begin{multline}\label{f1}
   \frac{1}{N^{2}}\sum\limits_{i,j=1}^{N}\sum\limits_{\ell=n+1}^{\infty}
   a_\ell^{(0)}Z(d,\ell)
   P_{\ell}^{(d)}(\langle\mathbf{x}_{i},\mathbf{x}_{j}\rangle)\\
\leq \frac{1}{N^{2}}\sum\limits_{i,j=1}^{N}\sum\limits_{\ell=n+1}^{\infty}\Delta a_{\ell}^{(0)}\sum\limits_{k=0}^{\ell}Z(d,k)
 P_{k}^{(d)}(\langle\mathbf{x}_{i},\mathbf{x}_{j}\rangle).
\end{multline}

Taking into account (\ref{ChristoffelDarbu1}), applying   Abel transform and formulas (\ref{ChristoffelDarbu2}) and (\ref{alK}) $K-1$ times and using positive definiteness in
every step we arrive at
\begin{multline}\label{fK}
    \frac{1}{N^{2}}\sum\limits_{i,j=1}^{N}\sum\limits_{\ell=n+1}^{\infty}
   a_\ell^{(0)}Z(d,\ell)
   P_{\ell}^{(d)}(\langle\mathbf{x}_{i},\mathbf{x}_{j}\rangle)\\
   \ll
   \frac{1}{N^{2}}\sum\limits_{i,j=1}^{N}\sum\limits_{\ell=n+1}^{\infty}a_{\ell}^{(K)}  \frac{2\ell+d-1+K}{d-1+K}
\frac{(d+K-1)_{\ell}}{(d/2)_{\ell}}
 P_{\ell}^{(\frac{d}{2}+K-1, \
   \frac{d}{2}-1)}(\langle\mathbf{x}_{i},\mathbf{x}_{j}\rangle). 
\end{multline}

From formulas (\ref{Pochhammer}) and (\ref{gamma})
we get
\begin{equation}\label{gammaTh}
\frac{(d+K-1)_{\ell}}{(d/2)_{\ell}}=\frac{\Gamma\big(\frac{d}{2}\big)}{\Gamma(d+K-1)}
\frac{\Gamma(d+K-1+\ell)}{\Gamma\big(\frac{d}{2}+\ell\big)}\sim
\frac{\Gamma\big(\frac{d}{2}\big)}{\Gamma(d+K-1)}\ell^{\frac{d}{2}+K-1}.
 \end{equation}

 Relations (\ref{alK}), (\ref{f1})-(\ref{gammaTh}) yield
 (\ref{Lemma1}) and Lemma~\ref{lem1} is proved.
\end{proof}

\begin{proof}[Proof of Lemma~\ref{lem2}]
  The upper estimate in (\ref{Lemma2}) follows from (\ref{Lemma1}). Let us show
  that the lower estimate is true.

  Rewriting the squared worst-case error as above using $K$ times iterated Abel
  transform and formulas (\ref{ChristoffelDarbu1}), (\ref{ChristoffelDarbu2}), (\ref{alK}) and (\ref{positKernel}), we obtain

\begin{multline}\label{f118}
  \mathrm{wce}(Q[X_{N}];\mathbb{H}^{(\frac{d}{2},\gamma)}(\mathbb{S}^{d}))^2\\
\gg\frac{1}{N^{2}}\sum\limits_{i,j=1}^{N}\sum\limits_{\ell=t+1}^{\infty}a_{\ell}^{(K)}  \frac{2\ell+d-1+K}{d-1+K}
\frac{(d+K-1)_{\ell}}{(d/2)_{\ell}}
 P_{\ell}^{(\frac{d}{2}+K-1, \frac{d}{2}-1)}(\langle\mathbf{x}_{i},\mathbf{x}_{j}\rangle)\\
-\sum\limits_{m=0}^{K-1}a_{t+1}^{(m)}\frac{1}{N^{2}}\sum\limits_{i,j=1}^{N}
\frac{(d+m)_{t}}{(d/2)_{t}}
 P_{t}^{(\frac{d}{2}+m, \frac{d}{2}-1)}(\langle\mathbf{x}_{i},\mathbf{x}_{j}\rangle).
\end{multline}

Because of the exactness of the numerical integration rule for polynomials of
degree $\leq t$ and of (\ref{a3}), we have
\begin{multline}\label{f119}
\frac{1}{N}\sum\limits_{i=1}^{N}
 P_{t}^{(\frac{d}{2}+m, \frac{d}{2}-1)}(\langle\mathbf{x}_{i},\mathbf{x}_{j}\rangle)=\int\limits_{\mathbb{S}^{d}}P_{t}^{(\frac{d}{2}+m, \frac{d}{2}-1)}(\langle\mathbf{x}_{i},\mathbf{x}\rangle)d\sigma_{d}(\mathbf{x})\\
=2^{d-1}\frac{\Gamma(\frac{d+1}{2})}{\sqrt{\pi}}\frac{(m+1)_{t}}{t!}\frac{\Gamma(\frac{d}{2}+t)}{\Gamma(d+t)}
\asymp t^{m-\frac{d}{2}}.
\end{multline}

From (\ref{Pochhammer}) and (\ref{alK}) we obtain the order estimate
\begin{equation}\label{f120}
a_{t+1}^{(m)}
\frac{(d+m)_{t}}{(d/2)_{t}}
 t^{m-\frac{d}{2}}\asymp t^{-d-2m}(\ln t)^{-2\gamma} t^{m+\frac{d}{2}}t^{m-\frac{d}{2}}= t^{-d}(\ln t)^{-2\gamma}.
\end{equation}

Formulas (\ref{gammaTh}), (\ref{f118})--(\ref{f120}) imply that
\begin{multline}\label{lemma1LowerEstimate}
\mathrm{wce}(Q[X_{N}];\mathbb{H}^{(\frac{d}{2},\gamma)}(\mathbb{S}^{d}))^2\\
\gg\frac{1}{N^{2}}\sum\limits_{i,j=1}^{N}\sum\limits_{\ell=t+1}^{\infty} \ell^{-\frac{d}{2}-K}(\ln \ell)^{-2\gamma}
 P_{\ell}^{(\frac{d}{2}+K-1, \frac{d}{2}-1)}(\langle\mathbf{x}_{i},\mathbf{x}_{j}\rangle)
 -C_{d,\gamma} t^{-d}(\ln t)^{-2\gamma}.
\end{multline}
Thus, Lemma 2 is proved.
\end{proof}

\begin{proof}[Proof of Lemma 3]
  For each $i\in\{1,\ldots,N\}$ we divide the sphere $\mathbb{S}^{d}$ into an
  upper hemisphere $H_{i}^{+}$ with 'north pole' $\mathbf{x}_{i}$ and a lower
  hemisphere $H_{i}^{-}$:
\begin{align*}
H_{i}^{+}&:=\Big\{\mathbf{x}\in\mathbb{S}^{d}\Big|\langle\mathbf{x}_{i},\mathbf{x}\rangle\geq0 \Big\},\\
H_{i}^{-}&:=\mathbb{S}^{d}\setminus H_{i}^{+}.
\end{align*}
Because the spherical cap $S\big(\mathbf{x}_{i};\alpha_{N}\big)$, where
$\alpha_{N}:=\arccos\Big(1-\frac{c^{2}_{3}}{8N^{\frac{2}{d}}}\Big)$, contains
no points of $X_{N}$ in its interior, except of the point $\mathbf{x}_{i}$, we
obtain
\begin{multline}\label{aa}
\frac{1}{N^{2}}\sum\limits_{i,j=1}^{N}\sum\limits_{\ell=t+1}^{\infty}\ell^{-\frac{d}{2}-K}(\ln \ell)^{-2\gamma}
 P_{\ell}^{(\frac{d}{2}+K-1, \frac{d}{2}-1)}(\langle\mathbf{x}_{i},\mathbf{x}_{j}\rangle)\\
=\frac{1}{N^{2}}\sum\limits_{j=1}^{N}
{\mathop{\sum}\limits_{
 i=1,\atop \mathbf{x}_{i}\in H_{i}^{\pm}\setminus
S(\pm\mathbf{x}_{j}; \alpha_{N}) }^{N}} 
\sum\limits_{\ell=t+1}^{\infty}\ell^{-\frac{d}{2}-K}(\ln \ell)^{-2\gamma}
 P_{\ell}^{(\frac{d}{2}+K-1, \frac{d}{2}-1)}(\langle\mathbf{x}_{i},\mathbf{x}_{j}\rangle)\\
+\frac{1}{N^{2}}\sum\limits_{j=1}^{N}
{\mathop{\sum}\limits_{
 i=1,\atop \mathbf{x}_{i}\in 
S(-\mathbf{x}_{j}; \alpha_{N}) }^{N}} 
\sum\limits_{\ell=t+1}^{\infty}\ell^{-\frac{d}{2}-K}(\ln \ell)^{-2\gamma}
 P_{\ell}^{(\frac{d}{2}+K-1, \frac{d}{2}-1)}(\langle\mathbf{x}_{i},\mathbf{x}_{j}\rangle)\\
+\frac{1}{N}\sum\limits_{\ell=t+1}^{\infty}\ell^{-\frac{d}{2}-K}(\ln \ell)^{-2\gamma}
 P_{\ell}^{(\frac{d}{2}+K-1, \frac{d}{2}-1)}(1).
 \end{multline}
 
From (\ref{JacobiMax}) and the relation 
\begin{equation*}
\sum\limits_{j=n+1}^{\infty}\xi(j)=\int\limits_{n}^{\infty}\xi(u)
du+ \mathcal{O}(\xi(n)),
\end{equation*}
which holds for any positive and decreasing function $\xi(u)$, $u\geq 1$, such
that $\int\limits_{n}^{\infty}\xi(u) du<\infty$, we have
\begin{multline}\label{diagonal}
\frac{1}{N}\sum\limits_{\ell=t+1}^{\infty}\ell^{-\frac{d}{2}-K}(\ln \ell)^{-2\gamma}
 P_{\ell}^{(\frac{d}{2}+K-1, \frac{d}{2}-1)}(1)
 \sim
\frac{1}{\Gamma\big(\frac{d}{2}+K\big)}
 \frac{1}{N}\sum\limits_{\ell=t+1}^{\infty}\ell^{-1}(\ln \ell)^{-2\gamma}\\
 =C_{d,\gamma} \frac{1}{N} (\ln t)^{1-2\gamma}+O\Big(\frac{1}{N}t^{-1}(\ln t)^{-2\gamma}\Big).
 \end{multline}
 
Now we estimate the second term from the equality (\ref{aa}). 
An application of equality (\ref{JacobiMinus}) yields
\begin{multline}\label{afterMinus}
\frac{1}{N^{2}}\sum\limits_{j=1}^{N}
{\mathop{\sum}\limits_{
 i=1,\atop \mathbf{x}_{i}\in 
S(-\mathbf{x}_{j}; \alpha_{N}) }^{N}} 
\sum\limits_{\ell=t+1}^{\infty}\ell^{-\frac{d}{2}-K}(\ln \ell)^{-2\gamma}
 P_{\ell}^{(\frac{d}{2}+K-1, \frac{d}{2}-1)}(\langle\mathbf{x}_{i},\mathbf{x}_{j}\rangle)\\
=\frac{1}{N^{2}}\sum\limits_{j=1}^{N}
{\mathop{\sum}\limits_{
 i=1,\atop \mathbf{x}_{i}\in 
S(-\mathbf{x}_{j}; \alpha_{N}) }^{N}} 
\sum\limits_{\ell=t+1}^{\infty}(-1)^{\ell}\ell^{-\frac{d}{2}-K}(\ln \ell)^{-2\gamma}
 P_{\ell}^{(\frac{d}{2}-1, \frac{d}{2}+K-1)}(-\langle\mathbf{x}_{i},\mathbf{x}_{j}\rangle).
 \end{multline}
 
  If $\mathbf{x}_{i}\in S(-\mathbf{x}_{j}; \alpha_{N})$, then 
 \begin{equation}\label{scalarProd}
 -\langle\mathbf{x}_{i},\mathbf{x}_{j}\rangle \geq\cos\alpha_{N}.
  \end{equation}
 
 From the elementary estimates
\begin{equation*}
\sin\theta\leq \theta\leq \frac{\pi}{2}\sin\theta, \quad 0\leq\theta\leq \frac{\pi}{2},
\end{equation*}
we obtain
\begin{equation}\label{alpha}
\Big(1-\frac{c^{2}_{3}}{16N^{\frac{2}{d}}}\Big)^{\frac{1}{2}}\frac{c_{3}}{2N^{\frac{1}{d}}}\leq\alpha_{N}\leq 
\frac{\pi}{4}\Big(1-\frac{c^{2}_{3}}{16N^{\frac{2}{d}}}\Big)^{\frac{1}{2}}\frac{c_{3}}{N^{\frac{1}{d}}}.
 \end{equation} 
 
 As for the sequence $X_{N}$, condition (\ref{condiii}) holds, it means that
 the spherical cap $S(-\mathbf{x}_{j}; \alpha_{N})$, $j=1,\ldots,N$, contains at
 most one point of the sequence $X_{N}$. This fact and formulas
 (\ref{afterMinus})--(\ref{alpha}) imply
 \begin{multline}\label{afterMinus1}
\bigg|\frac{1}{N^{2}}\sum\limits_{j=1}^{N}
{\mathop{\sum}\limits_{
 i=1,\atop \mathbf{x}_{i}\in 
S(-\mathbf{x}_{j}; \alpha_{N}) }^{N}} 
\sum\limits_{\ell=t+1}^{\infty}\ell^{-\frac{d}{2}-K}(\ln \ell)^{-2\gamma}
 P_{\ell}^{(\frac{d}{2}+K-1, \frac{d}{2}-1)}(\langle\mathbf{x}_{i},\mathbf{x}_{j}\rangle)\bigg|\\
\leq\frac{1}{N}
\sum\limits_{\ell=t+1}^{\infty}\ell^{-\frac{d}{2}-K}(\ln \ell)^{-2\gamma}
 \Big|P_{\ell}^{(\frac{d}{2}-1, \frac{d}{2}+K-1)}(\cos\theta_{N})\Big|,
 \end{multline}
 for some $\theta_N$ satisfying
 \begin{equation}\label{thetaN}
0\leq\theta_{N}\leq \frac{\pi}{4}\Big(1-\frac{c^{2}_{3}}{16N^{\frac{2}{d}}}\Big)^{\frac{1}{2}}\frac{c_{3}}{N^{\frac{1}{d}}}.
 \end{equation}
 
 Let $\theta_{N}>0$ and $\ell^{*}\in\mathbb{N}$ is such that 
 \begin{equation*}
 \frac{1}{\ell^{*}+1}\leq\theta_{N}\leq\frac{1}{\ell^{*}},
 \end{equation*}
 and $\ell^{*}=\infty$, if $\theta_{N}=0$.
 
 Then, applying the estimates (\ref{JacobiIneq1}), (\ref{constantc0}) and
 (\ref{thetaN}), we get
 \begin{multline}\label{estimMinus}
 \frac{1}{N}
\sum\limits_{\ell=t+1}^{\infty}\ell^{-\frac{d}{2}-K}(\ln \ell)^{-2\gamma}
 \Big|P_{\ell}^{(\frac{d}{2}-1, \frac{d}{2}+K-1)}(\cos\theta_{N})\Big|
 \ll\frac{1}{N}
\sum\limits_{\ell=t+1}^{\ell^{*}}\ell^{-\frac{d}{2}-K}(\ln \ell)^{-2\gamma}
 \ell^{\frac{d}{2}-1}\\
+\frac{1}{N}\theta_{N}^{-\frac{1}{2}-\frac{d}{2}+1}
\sum\limits_{\ell=\ell^{*}+1}^{\infty}\ell^{-\frac{d}{2}-K}(\ln \ell)^{-2\gamma}
 \ell^{-\frac{1}{2}}\ll N^{-\frac{K}{d}-1}(\ln N)^{-2\gamma}.
 \end{multline}
 
Now let us show that 
\begin{equation}\label{reminder}
\bigg|\frac{1}{N^{2}}\sum\limits_{j=1}^{N}
{\mathop{\sum}\limits_{
 i=1,\atop \mathbf{x}_{i}\in H_{i}^{\pm}\setminus
S(\pm\mathbf{x}_{j}; \alpha_{N}) }^{N}} 
\sum\limits_{\ell=t+1}^{\infty}\ell^{-\frac{d}{2}-K}(\ln \ell)^{-2\gamma}
 P_{\ell}^{(\frac{d}{2}+K-1, \frac{d}{2}-1)}(\langle\mathbf{x}_{i},\mathbf{x}_{j}\rangle)\bigg|
\ll \frac{1}{N} (\ln t)^{-2\gamma}.
 \end{equation}

Using formula (\ref{JacobiIneq}), we have that for $0<\theta<\pi$
\begin{equation}\label{JacobiD}
|P_{\ell}^{(\frac{d}{2}+K-1, \frac{d}{2}-1)}(\cos \theta)|\ll\ell^{-\frac{1}{2}}(\sin\theta)^{-\frac{d}{2}-K+\frac{1}{2}}+
 \ell^{-\frac{3}{2}}(\sin\theta)^{-\frac{d}{2}-K-\frac{1}{2}}.
 \end{equation}
Then
\begin{multline*}
\Big|\sum\limits_{\ell=t+1}^{\infty}\ell^{-\frac{d}{2}-K}(\ln \ell)^{-2\gamma}
 P_{\ell}^{(\frac{d}{2}+K-1, \frac{d}{2}-1)}(\cos\theta)\Big|\\
\ll(\sin\theta)^{-\frac{d}{2}-K+\frac{1}{2}}\sum\limits_{\ell=t+1}^{\infty}
\ell^{-\frac{d}{2}-K-\frac{1}{2}}(\ln \ell)^{-2\gamma}\\
+
(\sin\theta)^{-\frac{d}{2}-K-\frac{1}{2}}\sum\limits_{\ell=t+1}^{\infty}\ell^{-\frac{d}{2}-K-\frac{3}{2}}(\ln \ell)^{-2\gamma}\\
\ll(\sin\theta)^{-\frac{d}{2}-K+\frac{1}{2}}
t^{-\frac{d}{2}-K+\frac{1}{2}}(\ln t)^{-2\gamma}+
(\sin\theta)^{-\frac{d}{2}-K-\frac{1}{2}}t^{-\frac{d}{2}-K-\frac{1}{2}}(\ln t)^{-2\gamma}.
\end{multline*}

We define $\theta_{ij}^{\pm}\in[0,\pi]$ by
$\cos \theta_{ij}^{\pm}:=\langle\mathbf{x}_{i},\pm\mathbf{x}_{j}\rangle$. Then
$\sin \theta_{ij}^{+}=\sin \theta_{ij}^{-}$.

So,
\begin{align}
\bigg|\frac{1}{N^{2}}\sum\limits_{j=1}^{N}
{\mathop{\sum}\limits_{
 i=1,\atop \mathbf{x}_{i}\in H_{i}^{\pm}\setminus
S(\pm\mathbf{x}_{j}; \alpha_{N}) }^{N}} 
\sum\limits_{\ell=t+1}^{\infty}\ell^{-\frac{d}{2}-K}(\ln \ell)^{-2\gamma}
 P_{\ell}^{(\frac{d}{2}+K-1, \frac{d}{2}-1)}(\langle\mathbf{x}_{i},\mathbf{x}_{j}\rangle)\bigg| \notag \\
\ll
t^{-\frac{d}{2}-K+\frac{1}{2}}(\ln t)^{-2\gamma}
\frac{1}{N^{2}}\sum\limits_{j=1}^{N}
{\mathop{\sum}\limits_{
 i=1,\atop \mathbf{x}_{i}\in H_{i}^{\pm}\setminus
S(\pm\mathbf{x}_{j}; \alpha_{N}) }^{N}} 
(\sin\theta_{ij}^{\pm})^{-\frac{d}{2}-K+\frac{1}{2}}\notag  \\ \label{for10}
 +t^{-\frac{d}{2}-K-\frac{1}{2}}(\ln t)^{-2\gamma}
\frac{1}{N^{2}}\sum\limits_{j=1}^{N}{\mathop{\sum}\limits_{
 i=1,\atop \mathbf{x}_{i}\in H_{i}^{\pm}\setminus
S(\pm\mathbf{x}_{j}; \alpha_{N}) }^{N}}(\sin\theta_{ij}^{\pm})^{-\frac{d}{2}-K-\frac{1}{2}}.
 \end{align}

 From \cite[(3.30) and (3.33)]{Brauchart-Hesse2007:numerical_integration}, it
 follows that
 \begin{align}
  \frac{1}{N^{2}}\sum\limits_{j=1}^{N}{\mathop{\sum}\limits^{N}_{
 i=1,\atop \mathbf{x}_{i}\in H_{j}^{\pm}\setminus
S(\pm\mathbf{x}_{j}; \frac{c_{1}}{n})}}
  (\sin\theta_{ij}^{\pm})^{-\frac{d}{2}+\frac{1}{2}-k-L}
 \notag \\ \label{BrauchartHesse}
 \ll 1+n^{L+k-(d+1)/2}, \quad k=0,1,\ldots \quad \text{for }L>\frac{d+1}{2}.
 \end{align}
Choosing $K>\frac{d+1}{2}$, applying (\ref{constantc0}) and (\ref{BrauchartHesse}), we obtain
\begin{equation}\label{for11}
\frac{1}{N^{2}}\sum\limits_{j=1}^{N}{\mathop{\sum}\limits_{
 i=1,\atop \mathbf{x}_{i}\in H_{j}^{\pm}\setminus
S(\pm\mathbf{x}_{j}; \alpha_{N}) }^{N}}(\sin\theta_{ij}^{\pm})^{-\frac{d}{2}-K\pm\frac{1}{2}}
 \ll 1+(N^{\frac{1}{d}})^{K-\frac{d}{2}\mp\frac{1}{2}}\ll(N^{\frac{1}{d}})^{K-\frac{d}{2}\mp\frac{1}{2}},
 \end{equation}
  Formulas (\ref{constantc0}), (\ref{for10}) and (\ref{for11})  now imply that estimate (\ref{reminder}) holds.
 
 From (\ref{constantc0}), (\ref{diagonal}), (\ref{estimMinus}) and
 (\ref{reminder}) we obtain (\ref{Lemma3}) and
 Lemma 3 is proved.
 \end{proof}

\section{Lower bounds for the  worst-case error}
\label{lower}

The main result of this section is the following theorem.

\begin{thm}\label{thm4} Let $d\geq2$, $\gamma>\frac{1}{2}$, $Q[X_{N},\omega]$
  is an arbitrary  $N$-point cubature rule. Then, there exists a positive
  constant $C_{d,\gamma}$ such that
\begin{equation}\label{theoremLowerBound}
  \mathrm{wce}(Q[X_{N},\omega];\mathbb{H}^{(\frac{d}{2},\gamma)}(\mathbb{S}^{d}))
  \geq C_{d,\gamma}N^{-\frac{1}{2}}\left(\ln N\right)^{-\gamma}.
 \end{equation}
 The constant $C_{d,\gamma}$ depends only on $d$ and $\gamma$, but is
 independent of the rule $Q[X_{N},\omega]$ and the number of nodes $N$ of the
 rule.
 \end{thm}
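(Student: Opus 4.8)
The plan is to produce a single explicit ``bad'' test function $f\in\mathbb{H}^{(\frac d2,\gamma)}(\mathbb{S}^d)$ for which the quadrature error $|Q[X_N,\omega](f)-\mathrm{I}(f)|$ is forced to be large, and then normalise by $\|f\|_{\mathbb{H}^{(\frac d2,\gamma)}}$. The natural candidate is a positive-definite zonal bump, localised near each node but with the logarithmically-weighted smoothness exactly at the critical exponent. Concretely, I would take a kernel of the form
\begin{equation*}
 g_M(\mathbf{x},\mathbf{y})=\sum_{\ell=0}^{M} b_\ell\, Z(d,\ell)\,P_\ell^{(d)}(\langle\mathbf{x},\mathbf{y}\rangle),
\end{equation*}
with a band limit $M\asymp N^{1/d}$ and coefficients $b_\ell>0$ chosen so that $g_M(\mathbf{x},\mathbf{x})$ (the ``diagonal mass'') is as large as possible while the $\mathbb{H}^{(\frac d2,\gamma)}$-norm stays controlled. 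The point of the logarithmic weight $w_\ell(d,\gamma)=(1+\lambda_\ell)^{d/2}(\ln(3+\lambda_\ell))^{2\gamma}$ is that the critical power $(1+\lambda_\ell)^{d/2}\sim\ell^d$ combines with $Z(d,\ell)\sim\ell^{d-1}$ to give a sum $\sum_\ell \ell^{-1}(\ln\ell)^{-2\gamma}$, which \emph{converges} precisely for $\gamma>\tfrac12$; this is the analytic reason the extra $(\ln N)^{-\gamma}$ appears rather than a power of $N$.

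The first concrete step is to bound the worst-case error from below by a single point evaluation. Since $\mathbb{H}^{(\frac d2,\gamma)}$ is a reproducing kernel Hilbert space with kernel $K_{d,\gamma}$, for any fixed $\mathbf{z}\in\mathbb{S}^d$ the function $f=K_{d,\gamma}(\cdot,\mathbf{z})$ satisfies $\mathrm{I}(f)=0$ by orthogonality (only the $\ell=0$ term survives integration, and that term is the constant), while $Q[X_N,\omega](f)=\sum_j\omega_j K_{d,\gamma}(\mathbf{x}_j,\mathbf{z})$. A cleaner route, following Hesse--Sloan, is to integrate the squared error against a probability measure in $\mathbf{z}$ and use a \emph{Bilyk--Dai type} averaging: one exploits that for a suitably truncated positive-definite kernel the error functional reproduces a large diagonal contribution. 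I would choose $\mathbf{z}$ (or average over $\mathbf{z}$) so that the cap $S(\mathbf{z};c/N^{1/d})$ contains few nodes; a pigeonhole/volume argument guarantees such a cap exists because $N$ nodes cannot cover all $\asymp N$ disjoint caps of radius $N^{-1/d}$ with high local density everywhere. On such a cap $K_{d,\gamma}(\mathbf{x}_j,\mathbf{z})$ is small for the nodes while the ``self-energy'' $\|f\|^2=K_{d,\gamma}(\mathbf{z},\mathbf{z})=\sum_\ell w_\ell^{-1}Z(d,\ell)\asymp(\ln N)^{1-2\gamma}$ (truncated at $M\asymp N^{1/d}$) is of the expected size.

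The second step is the asymptotics of the truncated diagonal, which by \eqref{Zd} and the definition of $w_\ell$ gives
\begin{equation*}
 \sum_{\ell=1}^{M} w_\ell(d,\gamma)^{-1}Z(d,\ell)\asymp \sum_{\ell=1}^{M}\ell^{-1}(\ln\ell)^{-2\gamma}\asymp (\ln M)^{1-2\gamma}\asymp(\ln N)^{1-2\gamma}.
\end{equation*}
Combining $\mathrm{wce}^2\gg \bigl(\text{diagonal mass}\bigr)/N$ with this estimate yields $\mathrm{wce}^2\gg N^{-1}(\ln N)^{1-2\gamma}$, i.e.\ $\mathrm{wce}\gg N^{-1/2}(\ln N)^{1/2-\gamma}$; to get the sharper exponent $-\gamma$ claimed in \eqref{theoremLowerBound} I would not use the raw diagonal but rather test against a function whose norm is \emph{smaller} by a factor $(\ln N)^{1/2}$, exploiting that the averaged lower bound (genuinely valid for \emph{arbitrary} weights $\omega_j$) loses the extra $\sqrt{\log N}$ — exactly the gap the introduction advertises between \eqref{eq:lower} and \eqref{eq:wce}.

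The main obstacle is handling \emph{arbitrary} signed weights $\omega_j$ with $\sum_j\omega_j=1$, since the clean $t$-design cancellation of Section~\ref{designs} is unavailable here. The key device, as indicated in the introduction via the Bilyk--Dai reference \cite{Bilyk-Dai2016:geodesic_riesz}, is to choose the test kernel to be \emph{positive definite} (so that $\sum_{i,j}\omega_i\omega_j K_{d,\gamma}(\mathbf{x}_i,\mathbf{x}_j)\ge 0$ controls the cross terms) and to bound $\bigl|\sum_j\omega_j K_{d,\gamma}(\mathbf{x}_j,\mathbf{z})\bigr|$ uniformly in $\mathbf{z}$ using the decay of $K_{d,\gamma}$ off the diagonal, via the Jacobi estimate \eqref{JacobiIneq} together with the Abel-summation/Christoffel--Darboux machinery \eqref{ChristoffelDarbu1}--\eqref{ChristoffelDarbu2} already set up in Lemma~\ref{lem1}. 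Making the averaging measure and the choice of band limit $M$ interact correctly so that the weight-constraint $\sum_j\omega_j=1$ does not destroy the estimate is the delicate point; once the positive-definite structure absorbs the signs, the logarithmic sum above delivers the stated $(\ln N)^{-\gamma}$ factor.
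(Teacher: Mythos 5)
There is a genuine gap, and it sits at the quantitative core of your argument. First, the asymptotic you rely on is wrong: since $w_{\ell}(d,\gamma)^{-1}Z(d,\ell)\asymp \ell^{-1}(\ln\ell)^{-2\gamma}$ and $\gamma>\frac12$, the partial sum $\sum_{\ell=2}^{M}w_{\ell}(d,\gamma)^{-1}Z(d,\ell)$ \emph{converges to a constant} as $M\to\infty$; it is the \emph{tail} $\sum_{\ell>M}$ that is $\asymp(\ln M)^{1-2\gamma}$. The tail is what governs the $t$-design case (where the low frequencies are annihilated), but for an arbitrary rule you cannot discard the low frequencies, so your ``diagonal mass'' computation does not describe the right quantity. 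Second, the inequality $\mathrm{wce}^{2}\gg(\text{diagonal mass})/N$ is never justified for arbitrary signed weights: positive definiteness of the kernel only yields $\sum_{i,j}\omega_{i}\omega_{j}\tilde K_{d,\gamma}(\mathbf{x}_{i},\mathbf{x}_{j})\ge 0$, i.e.\ the trivial bound, and the constraint $\sum_{j}\omega_{j}=1$ gives you no way to isolate the diagonal from the off-diagonal terms. The Bilyk--Dai result (Statement~\ref{stat2}) that you invoke is an equal-weight statement, and even there the correct bound is $\min_{1\le\ell\le c_{d}N^{1/d}}w_{\ell}^{-1}(d,\gamma)\asymp N^{-1}(\ln N)^{-2\gamma}$, not a diagonal average. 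Finally, your logic about ``sharpening'' is inverted: a lower bound of order $N^{-1/2}(\ln N)^{1/2-\gamma}$ would be \emph{stronger} than the claimed $N^{-1/2}(\ln N)^{-\gamma}$ (it would in fact settle the optimality conjecture the paper leaves open), and dividing by a test function of \emph{smaller} norm makes a lower bound larger, not smaller.

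The paper's actual proof avoids all of this by a fooling-function construction, which you gesture at but do not carry out. By the packing result (Statement~\ref{stat1}) there are $2N$ disjoint caps of radius $\beta_{N}\asymp N^{-1/d}$, hence at least $N$ of them contain no node; one sets $f_{N}(\mathbf{x})=\sum_{i=1}^{N}\Phi_{N}(\langle\mathbf{x},\mathbf{y}_{i}\rangle)$ with a fixed $C^{\infty}$ bump scaled to an annulus of each empty cap, so that $Q[X_{N},\omega](f_{N})=0$ for \emph{any} weights while $\mathrm{I}(f_{N})\gg 1$. The entire difficulty is then the norm estimate $\|f_{N}\|_{\mathbb{H}^{(\frac d2,\gamma)}}^{2}\ll N(\ln N)^{2\gamma}$, which the paper obtains by splitting the spectral sum at $\ell=[N^{1/d}]$ and dominating the two pieces by $(\ln N)^{2\gamma}$ times the Sobolev norms $\|f_{N}\|_{\mathbb{H}^{1}}^{2}\ll N^{2/d}$ and $\|f_{N}\|_{\mathbb{H}^{(d+1)/2}}^{2}\ll N^{1+1/d}$ (using \eqref{normPhiHs} and the positivity \eqref{positKernel}). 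This interpolation step is the substantive content of the proof and is absent from your proposal; without it, or a valid replacement for your diagonal heuristic, the argument does not reach \eqref{theoremLowerBound}.
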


 In \cite{Hesse-Sloan2006:cubature_sobolev} for case $d=2$ and in
 \cite{Hesse2006:lower_cubature} for all $d\geq2$ the lower bound
\begin{equation}\label{lowerBoundWceHs}
\mathrm{wce}(Q[X_{N},\omega];\mathbb{H}^{s}(\mathbb{S}^{d}))\gg N^{-\frac{s}{d}}
\end{equation}
 was found.

 Before we actually give the proof of Theorem~\ref{thm4}, we formulate a
 packing result \cite[Lemma 1]{Hesse2006:lower_cubature}.

 \begin{stat}\label{stat1} Let $d\geq2$. Then there exist constants $\tilde{c}_{1}>0$ and $\tilde{c}_{2}\geq 1$
depending only on $d$, such that for any $N\in\mathbb{N}$, there exist
   $N_{0}$
   points $\mathbf{y}_{1},\ldots,\mathbf{y}_{N_{0}}$ on $\mathbb{S}^{d}$ and an
   angle $\beta_{N}$, with
\begin{align*}
\beta_{N}&=\tilde{c}_{1}(2N)^{-\frac{1}{d}},\\
2N&\leq N_{0}\leq \tilde{c}_{2}2N,
\end{align*}
such that the caps $S(\mathbf{y}_{i}; \beta_{N})$, $i=1,\ldots,N_{0}$ form a
packing of $\mathbb{S}^{d}$ (that is $S(\mathbf{y}_{i}; \beta_{N})$ and
$S(\mathbf{y}_{j}; \beta_{N})$ with $i\neq j$ touch at most at their
boundaries).
\end{stat}

As we consider a packing with $2N\geq 2$ caps in Statement~\ref{stat1}, the
angle $\beta_{N}$ can be at most $\frac{\pi}{2}$ (which is achieved for a
packing with 2 caps with opposite centres).

\begin{proof}[Proof of Theorem~\ref{thm4}] To prove the lower bound we will use
  the same 'fooling' function as in \cite{Hesse2006:lower_cubature}, that is a
  function which vanishes in all nodes of the cubature rule $Q[X_{N},\omega]$
  but has large integral and small norm.

At the beginning we construct the function $\Phi\in \mathbb{C}^{\infty}(\mathbb{R})$ with the following properties: 
(i) $\Phi(t)\geq 0$ for all $t\in\mathbb{R}$; 
(ii) $\max\limits_{t\in\mathbb{R}}\Phi(t)=\Phi(0)=1$;
(iii) $\Phi$ has the compact support $\mathrm{supp}(\Phi)=[-1, 1]$.

Statement~\ref{stat1} guarantees that there exist at least $2N$ spherical caps
$S(\mathbf{y}_{i}; \beta_{N})$, which touch at most at their
boundaries. Consequently, at least $N$ of these spherical caps do not contain
any node of the cubature rule in their interior.

We shift the argument of the function $\Phi$ in such a way, that the support of
the function will be $[\cos\beta_{N}, \cos\frac{\beta_{N}}{2}]$.
 
 The scaled version of $\Phi$ is given by
 \begin{equation*}
 \Phi_{N}(t):=\Phi\bigg(\frac{2t-(\cos\frac{\beta_{N}}{2}+\cos\beta_{N})}{2\sin\frac{3\beta_{N}}{4} \sin\frac{\beta_{N}}{4}} \bigg), \quad t\in\mathbb{R}.
 \end{equation*}

We define our 'fooling' function 
$f_{N}\in \mathbb{C}^{\infty}(\mathbb{S}^{d})$ by
\begin{equation*}
f_{N}(\mathbf{x}):=\sum\limits_{i=1}^{N}\Phi_{N}(\langle\mathbf{x},\mathbf{y}_{i}\rangle), \quad \mathbf{x}\in \mathbb{S}^{d}.
\end{equation*}

In \cite{Hesse2006:lower_cubature} it was proved that for all $s\geq0$
 \begin{equation}\label{normPhiHs}
\|f_{N}\|_{\mathbb{H}^{s}}\leq C_{s,d}N^{\frac{s}{d}}.
 \end{equation}

 The function $f_{N}$ vanishes in all nodes of the cubature rule, that is,
 $Q[X_{N},\omega](f_{N})=0$. And (see formula (33) of
 \cite{Hesse2006:lower_cubature})
 \begin{equation*}
 I(f_{N})\geq c_{d}.
 \end{equation*}
Hence,
\begin{multline}\label{Q-Int}
 \mathrm{wce}(Q[X_{N},\omega];\mathbb{H}^{(\frac{d}{2},\gamma)}(\mathbb{S}^{d}))
\geq\bigg|Q[X_{N},\omega]\Big(\frac{f_{N}}{\|f_{N}\|_{\mathbb{H}^{(\frac{d}{2},\gamma)}}}\Big)-I\Big(\frac{f_{N}}{\|f_{N}\|_{\mathbb{H}^{(\frac{d}{2},\gamma)}}}\Big) \bigg|\\
=\frac{I(f_{N})}{\|f_{N}\|_{\mathbb{H}^{(\frac{d}{2},\gamma)}}}\gg \frac{1}{\|f_{N}\|_{\mathbb{H}^{(\frac{d}{2},\gamma)}}}.
 \end{multline}
 
 The function $\Phi_{N}$ can be expanded on $[-1, 1]$ into an $L_{2}([-1,
 1])$ convergent Laplace series 
 \begin{equation*}
 \Phi_{N}=\sum\limits_{\ell=0}^{\infty}Z(d,\ell)\Big(\int\limits_{-1}^{1}\Phi_{N}(t)P_{\ell}^{(d)}(t)dt \Big)P_{\ell}^{(d)}.
 \end{equation*}
 Hence,
 \begin{equation}\label{fNLegandre}
  f_{N}(\mathbf{x})=\sum\limits_{i=1}^{N}\sum\limits_{\ell=0}^{\infty}Z(d,\ell)\bigg(\int\limits_{-1}^{1}\Phi_{N}(t)P_{\ell}^{(d)}(t)dt \bigg)P_{\ell}^{(d)}(\langle\mathbf{x},\mathbf{y}_{i}\rangle).
 \end{equation}
 
 Due to the definition (\ref{norm}), representation (\ref{fNLegandre}), the
 addition theorem (\ref{additiontheorem}) and inequality (\ref{normPhiHs}), we
 have the following estimate
 \begin{equation}\label{normHs}
 \|f_{N}\|^{2}_{\mathbb{H}^{s}}
= \sum\limits_{\ell=0}^{\infty}\bigg(\int\limits_{-1}^{1}\Phi_{N}(t)P_{\ell}^{(d)}(t)dt \bigg)^{2}
\left(1+\lambda_{\ell}\right)^{s}
Z(d,\ell)\sum\limits_{i,j=1}^{N} P_{\ell}^{(d)}(\langle\mathbf{y}_{i},\mathbf{y}_{j}\rangle)\ll N^{\frac{2s}{d}},
 \end{equation} 
which holds for $s>0$ by \cite{Brauchart-Hesse2007:numerical_integration}.
 
  The corresponding norm of the function $f_{N}$ in $\mathbb{H}^{(\frac{d}{2},\gamma)}$ has the form
 \begin{multline}\label{normH}
 \|f_{N}\|^{2}_{\mathbb{H}^{(\frac{d}{2},\gamma)}}
=  \sum\limits_{\ell=0}^{[N^{\frac{1}{d}}]}\bigg(\int\limits_{-1}^{1}\Phi_{N}(t)P_{\ell}^{(d)}(t)dt \bigg)^{2}
w_{\ell}(d,\gamma)
Z(d,\ell)\sum\limits_{i,j=1}^{N} P_{\ell}^{(d)}(\langle\mathbf{y}_{i},\mathbf{y}_{j}\rangle) \\
+ \sum\limits_{\ell=[N^{\frac{1}{d}}]+1}^{\infty}\bigg(\int\limits_{-1}^{1}\Phi_{N}(t)P_{\ell}^{(d)}(t)dt \bigg)^{2}
w_{\ell}(d,\gamma)
Z(d,\ell)\sum\limits_{i,j=1}^{N} P_{\ell}^{(d)}(\langle\mathbf{y}_{i},\mathbf{y}_{j}\rangle).
 \end{multline}

 Taking into account (\ref{positKernel}) and setting $s=1$ in (\ref{normHs}), we
 obtain
\begin{equation}\label{normHsEstimateS=1}
\sum\limits_{\ell=0}^{[N^{\frac{1}{d}}]}\bigg(\int\limits_{-1}^{1}\Phi_{N}(t)P_{\ell}^{(d)}(t)dt \bigg)^{2}
(1+\lambda_{\ell})
Z(d,\ell)\sum\limits_{i,j=1}^{N} P_{\ell}^{(d)}(\langle\mathbf{y}_{i},\mathbf{y}_{j}\rangle)\ll N^{\frac{2}{d}}.
 \end{equation}
 
 Thus, (\ref{normHsEstimateS=1}) yields
 \begin{multline}\label{ineq1}
\sum\limits_{\ell=0}^{[N^{\frac{1}{d}}]}\bigg(\int\limits_{-1}^{1}\Phi_{N}(t)P_{\ell}^{(d)}(t)dt \bigg)^{2}
w_{\ell}(d,\gamma)
Z(d,\ell)\sum\limits_{i,j=1}^{N} P_{\ell}^{(d)}(\langle\mathbf{y}_{i},\mathbf{y}_{j}\rangle)\\
\ll(N^{\frac{1}{d}})^{d-2} (\ln N)^{2\gamma} \sum\limits_{\ell=0}^{[N^{\frac{1}{d}}]}\bigg(\int\limits_{-1}^{1}\Phi_{N}(t)P_{\ell}^{(d)}(t)dt \bigg)^{2}
(1+\lambda_{\ell})
Z(d,\ell)\sum\limits_{i,j=1}^{N} P_{\ell}^{(d)}(\langle\mathbf{y}_{i},\mathbf{y}_{j}\rangle)\\
\ll N (\ln N)^{2\gamma}.
 \end{multline}

Setting $s=\frac{d+1}{2}$ in (\ref{normHs}), we get
\begin{equation}\label{normHsEstimateS=d/2+1}
\sum\limits_{\ell=[N^{\frac{1}{d}}]+1}^{\infty}\bigg(\int\limits_{-1}^{1}\Phi_{N}(t)P_{\ell}^{(d)}(t)dt \bigg)^{2}
(1+\lambda_{\ell})^{\frac{d+1}{2}}
Z(d,\ell)\sum\limits_{i,j=1}^{N} P_{\ell}^{(d)}(\langle\mathbf{y}_{i},\mathbf{y}_{j}\rangle)\ll N^{1+\frac{1}{d}}.
 \end{equation}
 Thus, (\ref{normHsEstimateS=d/2+1}) yields
 \begin{multline}\label{ineq2}
  \sum\limits_{\ell=[N^{\frac{1}{d}}]+1}^{\infty}\bigg(\int\limits_{-1}^{1}\Phi_{N}(t)P_{\ell}^{(d)}(t)dt \bigg)^{2}
w_{\ell}(d,\gamma)
Z(d,\ell)\sum\limits_{i,j=1}^{N} P_{\ell}^{(d)}(\langle\mathbf{y}_{i},\mathbf{y}_{j}\rangle)\\
 \ll N^{-\frac{1}{d}}(\ln N)^{2\gamma} \sum\limits_{\ell=[N^{\frac{1}{d}}]+1}^{\infty}\bigg(\int\limits_{-1}^{1}\Phi_{N}(t)P_{\ell}^{(d)}(t)dt \bigg)^{2}
(1+\lambda_{\ell})^{\frac{d+1}{2}}
Z(d,\ell)\sum\limits_{i,j=1}^{N} P_{\ell}^{(d)}(\langle\mathbf{y}_{i},\mathbf{y}_{j}\rangle)\\
\ll N (\ln N)^{2\gamma}.
 \end{multline}

Estimates (\ref{normH}), (\ref{ineq1})  and (\ref{ineq2}) imply
 \begin{equation}\label{normEstimate}
\|f_{N}\|_{\mathbb{H}^{(\frac{d}{2},\gamma)}}\ll N^{\frac{1}{2}} (\ln N)^{\gamma}.
 \end{equation}

From (\ref{Q-Int}) and (\ref{normEstimate}) we obtain (\ref{theoremLowerBound})
and Theorem~\ref{thm4} is proved.
\end{proof}

We should remark,  that we
can obtain  Theorem~\ref{thm4} in the case of equal
weights by simply applying  \cite[Theorem~4.2]{Bilyk-Dai2016:geodesic_riesz}.

Let the zonal function $F$:
$F(\mathbf{x},\mathbf{y})=F(\langle\mathbf{x},\mathbf{y}\rangle)$,
$\mathbf{x},\mathbf{y}\in\mathbb{S}^{d}$ be continuous on the segment $[-1, 1]$
and have the form
\begin{equation}\label{expansF}
F(\mathbf{x},\mathbf{y})=\sum\limits_{\ell=0}^{\infty}\hat{F}(d,\ell)Z(d,\ell)
 P_{\ell}^{(d)}(\langle\mathbf{x},\mathbf{y}\rangle),
\end{equation}
where $\hat{F}(d,\ell)\geq0$.  

% Then, according to \cite[Lemma 4.1]{Bilyk-Dai2016:geodesic_riesz}, there exists
% a function $f$, such that
% \begin{equation}\label{expans}
%   \int\limits_{-1}^{1}|f(t)|^{2}(1-t^{2})^{\lambda-\frac{1}{2}}dt<\infty,
%   \lambda=\dfrac{d-1}{2},
% \end{equation}
% and
% \begin{equation}\label{lemmaDB}
%  F(\langle\mathbf{x},\mathbf{y}\rangle)=\int_{\mathbb{S}^{d}}f(\langle\mathbf{x}, \mathbf{z}\rangle)f(\langle\mathbf{z},\mathbf{y}\rangle)d\sigma_{d}(\mathbf{z}), \quad\mathbf{x},\mathbf{y}\in \mathbb{S}^{d}.
%  \end{equation}
% 

The following Statement 2 is \cite[Theorem~4.2]{Bilyk-Dai2016:geodesic_riesz}.
 
\begin{stat}\label{stat2} Let $\lambda=\dfrac{d-1}{2}$. Assume that $F$
  satisfies relation (\ref{expansF}).  Then there exists positive constants
  $c_d$ and $C_d$ depending only on $d$ and $F$, such that for any
  $N\in\mathbb{N}$ and a given set of
  $N$ points $X_{N}=\{\mathbf{x}_{1},.., \mathbf{x}_{N}\}\subset\mathbb{S}^{d}$
the inequality
  \begin{multline}\label{theoremDB}
   C_{d}\min\limits_{1\leq \ell\leq c_{d}N^{1/d}}\hat{F}(d,\ell)
   \leq
   \frac{1}{N^{2}}\sum\limits_{j=1}^{N}\sum\limits_{i=1}^{N}F(\langle\mathbf{x}_{i},\mathbf{x}_{j}\rangle)-\hat F(d,0)
 \end{multline}
 holds.
\end{stat}
  
Applying this statement to $F=\tilde K_{d,\gamma}$ gives
\begin{multline}\label{form}
\mathrm{wce}(Q[X_{N}];\mathbb{H}^{(\frac{d}{2},\gamma)}(\mathbb{S}^{d}))^{2}
=\sum\limits_{i,j=1}^{N}
\sum\limits_{\ell=1}^{\infty}w^{-1}_{\ell}(d,\gamma) Z(d,\ell)
 P_{\ell}^{(d)}(\langle\mathbf{x}_{i},\mathbf{x}_{j}\rangle)\\
\geq C_{d}\min\limits_{1\leq \ell\leq c_{d}N^{1/d}}w^{-2}_{\ell}(d,\gamma) 
  \gg C_{d}N^{-1}\left(\ln N\right)^{-2\gamma}.
 \end{multline}
And, therefore,
\begin{equation}\label{lowerestimate}
  \mathrm{wce}(Q[X_{N}];\mathbb{H}^{(\frac{d}{2},\gamma)}(\mathbb{S}^{d}))\geq C_{d,\gamma}N^{-1/2}\left(\ln N\right)^{-\gamma}.
 \end{equation}

In the same way, by applying (\ref{theoremDB}), one can easily obtain estimate (\ref{lowerBoundWceHs})  in the case of equal weights.
  
 \section{QMC designs for $\mathbb{H}^{(\frac{d}{2},\gamma)}(\mathbb{S}^{d})$ and their properties}
 \label{QMC}  
 
\subsection{QMC designs for $\mathbb{H}^{s}(\mathbb{S}^{d})$ and $\mathbb{H}^{(\frac{d}{2},\gamma)}(\mathbb{S}^{d})$ }

 Let us formulate at the beginning the definition of QMC-designs for Sobolev
 spaces $\mathbb{H}^{s}(\mathbb{S}^{d})$ (see,
 e.g. \cite{Brauchart-Saff-Sloan+2014:qmc_designs}).

 \begin{defi} Given $s>\frac{d}{2}$, a sequence $X_{N}$ of $N$--point
   configurations on $\mathbb{S}^{d}$ with $N\rightarrow\infty$ is said to be a
   sequence of QMC designs for $\mathbb{H}^{s}(\mathbb{S}^{d})$ if there exists
   $c(s,d)>0$, independent of $N$, such that
\begin{equation}\label{QmcHsDefinition}
\sup
\limits_{
 f\in \mathbb{H}^{s}, \|f\|_{\mathbb{H}^{s}}\leq1 }
 \left|\frac{1}{N}\sum\limits_{\mathbf{x}\in X_{N}}f(\mathbf{x})-\int_{\mathbb{S}^{d}}f(\mathbf{x})d\sigma_{d}(\mathbf{x}) \right|\leq\frac{c(s,d)}{N^{\frac{s}{d}}}.
\end{equation}
\end{defi}

We define the notion of a sequence of QMC designs for
$\mathbb{H}^{(\frac{d}{2},\gamma)}(\mathbb{S}^{d})$, $\gamma>\frac{1}{2}$, as
it was defined for Sobolev classes $\mathbb{H}^{s}(\mathbb{S}^{d})$,
$s>\frac{d}{2}$.
 
\begin{defi} Given $\gamma>\frac{1}{2}$, a sequence $(X_{N})_N$ of $N$--point
  configurations on $\mathbb{S}^{d}$ with $N\rightarrow\infty$ is said to be a
  sequence of QMC designs for
  $\mathbb{H}^{(\frac{d}{2},\gamma)}(\mathbb{S}^{d})$ if there exists
  $c(\gamma,d)>0$, independent of $N$, such that
\begin{equation}\label{QMCdefinition}
\sup
\limits_{
 f\in \mathbb{H}^{(\frac{d}{2},\gamma)}, \|f\|_{\mathbb{H}^{(\frac{d}{2},\gamma)}}\leq1 }
 \left|\frac{1}{N}\sum\limits_{\mathbf{x}\in X_{N}}f(\mathbf{x})-\int_{\mathbb{S}^{d}}f(\mathbf{x})d\sigma_{d}(\mathbf{x}) \right|\leq\frac{c(\gamma,d)}{N^{\frac{1}{2}}(\ln N)^{\gamma-\frac{1}{2}}}.
\end{equation}
\end{defi}

\begin{thm}\label{thm5} Given $s>\frac{d}{2}$, let $(X_{N})_N$ be a sequence of
  QMC designs for
  $\mathbb{H}^{s}(\mathbb{S}^{d})$. Then $(X_{N})_{N}$ is a sequence of QMC designs
  for $\mathbb{H}^{(\frac{d}{2},\gamma)}(\mathbb{S}^{d})$, for all
  $\gamma>\frac{1}{2}$.
  \end{thm}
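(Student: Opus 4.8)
The plan is to reduce the whole statement to a comparison between two weighted sums of one and the same nonnegative sequence. Since a QMC design uses equal weights $\omega_i=\frac1N$, the kernel formula \eqref{wceKernel} together with the addition theorem \eqref{additiontheorem} represents both worst-case errors as series in
\[
T_\ell:=\frac1{N^2}\sum_{i,j=1}^N Z(d,\ell)P_\ell^{(d)}(\langle\mathbf{x}_i,\mathbf{x}_j\rangle)=\sum_{k=1}^{Z(d,\ell)}\Big(\frac1N\sum_{i=1}^N Y_{\ell,k}^{(d)}(\mathbf{x}_i)\Big)^2\ge0,
\]
namely $\mathrm{wce}(Q[X_N];\mathbb{H}^{s})^2=\sum_{\ell\ge1}(1+\lambda_\ell)^{-s}T_\ell$ and $\mathrm{wce}(Q[X_N];\mathbb{H}^{(\frac d2,\gamma)})^2=\sum_{\ell\ge1}w_\ell(d,\gamma)^{-1}T_\ell$. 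The hypothesis controls the first sum; I want to bound the second.

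The device I would use is the integral representation of the logarithmic weight: for $a>1$ and $2\gamma>0$ one has $(\ln a)^{-2\gamma}=\frac1{\Gamma(2\gamma)}\int_0^\infty a^{-u}u^{2\gamma-1}\,du$. Taking $a=3+\lambda_\ell$ in $w_\ell(d,\gamma)^{-1}=(1+\lambda_\ell)^{-\frac d2}(\ln(3+\lambda_\ell))^{-2\gamma}$ and interchanging sum and integral (legitimate by Tonelli, since every factor is nonnegative) gives
\[
\mathrm{wce}(Q[X_N];\mathbb{H}^{(\frac d2,\gamma)})^2=\frac1{\Gamma(2\gamma)}\int_0^\infty u^{2\gamma-1}\,\Sigma(u)\,du,
\]
where $\Sigma(u):=\sum_{\ell\ge1}(1+\lambda_\ell)^{-\frac d2}(3+\lambda_\ell)^{-u}T_\ell$. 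Because $(3+\lambda_\ell)^{-u}\le(1+\lambda_\ell)^{-u}$ for $\ell\ge1$, the inner series is dominated by the Sobolev worst-case error at the shifted smoothness: $\Sigma(u)\le\mathrm{wce}(Q[X_N];\mathbb{H}^{\frac d2+u})^2$.

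Next I would invoke the theorem of Brauchart, Saff, Sloan and Womersley \cite{Brauchart-Saff-Sloan+2014:qmc_designs} that a sequence of QMC designs for $\mathbb{H}^{s}$ is a sequence of QMC designs for every $\mathbb{H}^{s'}$ with $\frac d2<s'\le s$. This yields $\Sigma(u)\le c(\tfrac d2+u,d)^2\,N^{-1-\frac{2u}{d}}$ on $0<u\le s-\frac d2$. For $u>s-\frac d2$ I would factor out $(3+\lambda_1)^{-(u-(s-\frac d2))}$ and bound the rest by $\mathrm{wce}(Q[X_N];\mathbb{H}^{s})^2\ll N^{-2s/d}$, so that the tail of the $u$-integral decays exponentially and contributes only $\mathcal{O}(N^{-2s/d})$, which is negligible against the claimed order. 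On the main range the substitution $v=\frac{2u}{d}\ln N$ turns $\int_0^{s-\frac d2}u^{2\gamma-1}c(\tfrac d2+u,d)^2N^{-2u/d}\,du$ into a gamma-type integral of scale $(\ln N)^{-1}$, and with the correct constant this produces precisely the factor $(\ln N)^{1-2\gamma}$, giving $\mathrm{wce}(Q[X_N];\mathbb{H}^{(\frac d2,\gamma)})^2\ll N^{-1}(\ln N)^{1-2\gamma}$, i.e. the bound required by \eqref{QMCdefinition}.

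The hard part is exactly the quantitative behaviour of $c(\frac d2+u,d)$ as $u\to0^+$. Splitting $\Sigma(u)$ at $\ell\asymp N^{1/d}$ treats the low frequencies cleanly from the $\mathbb{H}^{s}$ hypothesis, but for the high frequencies the trivial bound $T_\ell\le Z(d,\ell)$ discards the decisive factor $N^{-1}$; that missing equidistribution is what the $\mathbb{H}^{\frac d2+u}$ property must supply, and bounding each frequency block separately overcounts the budget and loses a further logarithm. To reproduce the sharp order $(\ln N)^{1-2\gamma}$ I must therefore keep the whole series $\Sigma(u)$ together and know that $c(\frac d2+u,d)^2\ll u^{-1}$ uniformly for small $u$: a rate $c^2\ll u^{-\alpha}$ would only give $(\ln N)^{\alpha-2\gamma}$, and solely $\alpha=1$ matches the stated estimate, whose sharpness is confirmed by the lower bound in Theorem~\ref{thm1}. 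The core of the work is thus to re-run the Brauchart--Saff--Sloan--Womersley splitting while tracking the dependence of the constant on $s'-\frac d2$ and establishing this $u^{-1}$ growth.
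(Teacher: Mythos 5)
Your reduction is sound and genuinely different from the paper's route, but as written it is not a complete proof: everything rests on the estimate $c(\frac d2+u,d)^2\ll u^{-1}$ as $u\to0^+$, which you correctly identify as the crux and then leave as ``the core of the work''. The good news is that this estimate is true and follows from exactly the ingredients the paper borrows from \cite{Brauchart-Saff-Sloan+2014:qmc_designs}: writing $\mathrm{wce}(Q[X_{N}];\mathbb{H}^{s'})^{2}=\frac{1}{\Gamma(s')}\int_{0}^{\infty}e^{-t}t^{s'-1}h(t)\,dt$ with $h$ as in \eqref{function_h}, splitting the integral at $\varepsilon=[\mathrm{wce}(Q[X_{N}];\mathbb{H}^{s})]^{2/s}\ll N^{-2/d}$, using $t^{s'-1}\le\varepsilon^{s'-s}t^{s-1}$ on $[\varepsilon,\infty)$ and the bound $t^{d/2}h(t)\ll\varepsilon^{d/2}$ on $(0,\varepsilon)$, one gets $\int_{0}^{\varepsilon}e^{-t}t^{s'-1}h(t)\,dt\ll\varepsilon^{d/2}\,\varepsilon^{s'-d/2}/(s'-\tfrac d2)$, i.e. $\mathrm{wce}(Q[X_{N}];\mathbb{H}^{\frac d2+u})^{2}\ll u^{-1}N^{-1-2u/d}$. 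With that supplied, your Mellin-type representation $(\ln a)^{-2\gamma}=\frac{1}{\Gamma(2\gamma)}\int_{0}^{\infty}a^{-u}u^{2\gamma-1}\,du$, the Tonelli interchange, and the resulting $\Gamma(2\gamma-1)$-integral (which is precisely where $\gamma>\frac12$ enters) do yield $N^{-1}(\ln N)^{1-2\gamma}$, and your treatment of the tail $u>s-\frac d2$ is correct.

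Compared with the paper: the paper proves the intermediate Lemma~\ref{lem4} by representing the \emph{whole} weight $w_{\ell}(d,\gamma)^{-1}$ as a single Laplace transform $\int_{0}^{\infty}e^{-(1+\lambda_{\ell})t}g(t)\,dt$, estimating the kernel $g$ (the logarithm reappears as the factor $(\ln\frac1t)^{-2\gamma}$ in \eqref{estimation_g}), and splitting the $t$-integral at $1$ and $\varepsilon/2$. Your version decomposes only the logarithmic factor, as a superposition of the already-understood Sobolev scales $\mathbb{H}^{\frac d2+u}$; this is arguably more transparent and makes visible why the answer is $(\ln N)^{1-2\gamma}$ rather than $(\ln N)^{-2\gamma}$ (the $u^{-1}$ blow-up of the embedding constant costs exactly one power of $\ln N$). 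But it does not avoid the heat-kernel estimate $t^{d/2}h(t)\ll\varepsilon^{d/2}$ --- it merely relocates it into the unproven constant-tracking step, so you must still carry out that computation to close the argument.
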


\begin{thm}\label{thm8} Given $\gamma>\frac{1}{2}$, let $(X_{N})_{N}$ be a sequence
  of QMC designs for $\mathbb{H}^{(\frac{d}{2},\gamma)}$. Then $(X_{N})_{N}$ is a
  sequence of QMC designs for
  $\mathbb{H}^{(\frac{d}{2},\gamma')}(\mathbb{S}^{d})$, for all
  ${\frac{1}{2}<\gamma'\leq\gamma}$.
  \end{thm}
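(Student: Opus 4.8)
The plan is to reduce the claim to a comparison of the two worst-case errors through their Laplace--Fourier (spectral) representations, exploiting that the weights of the two spaces differ only by a logarithmic factor: for $\ell\ge1$ one has
\[
w_{\ell}(d,\gamma')^{-1}=w_{\ell}(d,\gamma)^{-1}\bigl(\ln(3+\lambda_{\ell})\bigr)^{2(\gamma-\gamma')}.
\]
As in Theorem~\ref{thm5}, I would first note that the QMC-design property is equivalent to $\mathrm{wce}(Q[X_N];\mathbb{H}^{(\frac d2,\gamma)})^2\ll N^{-1}(\ln N)^{1-2\gamma}$. For a fixed test function $f$ with $\|f\|_{\mathbb{H}^{(\frac d2,\gamma')}}\le1$ I would split it into its low- and high-degree parts $f=f_{\le L}+f_{>L}$ at the threshold $L=[N^{1/d}]$ and estimate $|Q[X_N](f)-\mathrm{I}(f)|$ on each part separately; equivalently one splits the sum over $\ell$ in the kernel representation (\ref{wceKernel})--(\ref{kernel0}), every summand being nonnegative by the positive-definiteness recorded in (\ref{positKernel}).

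For the low-degree part I would pass to the $\gamma$-norm. Since $\ln(3+\lambda_\ell)\le\ln(3+\lambda_L)\ll\ln N$ for $\ell\le L$, one gets
\[
\|f_{\le L}\|_{\mathbb{H}^{(\frac d2,\gamma)}}^2
=\sum_{\ell=0}^{L}\bigl(\ln(3+\lambda_\ell)\bigr)^{2(\gamma-\gamma')}w_\ell(d,\gamma')\sum_{k}|\hat f_{\ell,k}|^2
\ll(\ln N)^{2(\gamma-\gamma')}\|f\|_{\mathbb{H}^{(\frac d2,\gamma')}}^2\ll(\ln N)^{2(\gamma-\gamma')}.
\]
Combined with the hypothesis for $\gamma$ this yields $|Q[X_N](f_{\le L})-\mathrm{I}(f_{\le L})|\le\mathrm{wce}(Q[X_N];\mathbb{H}^{(\frac d2,\gamma)})\,\|f_{\le L}\|_{\mathbb{H}^{(\frac d2,\gamma)}}\ll N^{-1/2}(\ln N)^{-\gamma+\frac12}(\ln N)^{\gamma-\gamma'}=N^{-1/2}(\ln N)^{-\gamma'+\frac12}$, which is exactly the rate demanded for $\gamma'$. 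The point is that treating the whole low block at once, rather than frequency by frequency, loses nothing.

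The remaining, and genuinely delicate, step is the high-degree part. Here $\mathrm{I}(f_{>L})=0$, since every spherical harmonic of degree $\ell\ge1$ integrates to zero, so it suffices to bound $|Q[X_N](f_{>L})|$. Taking the supremum over the unit ball and using the reproducing-kernel identity as in (\ref{wceKernel}), this reduces precisely to estimating the tail
\[
\sum_{\ell>L}w_\ell(d,\gamma')^{-1}Z(d,\ell)\,\frac1{N^2}\sum_{i,j=1}^{N}P_\ell^{(d)}(\langle\mathbf x_i,\mathbf x_j\rangle)\ll N^{-1}(\ln N)^{1-2\gamma'},
\]
whose main contribution should be the diagonal $\frac1N\sum_{\ell>L}\ell^{-1}(\ln\ell)^{-2\gamma'}\asymp\frac1N(\ln N)^{1-2\gamma'}$. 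I expect this to be the principal obstacle: the factor $(\ln(3+\lambda_\ell))^{2(\gamma-\gamma')}$ is unbounded as $\ell\to\infty$, so the $\gamma$-bound cannot be used termwise on the tail, and the abstract assumption alone does not force the off-diagonal contribution to be of lower order. My plan is therefore to control this tail by the mechanism of Lemma~\ref{lem3} and Remark~\ref{rem1}: split each sphere into a small cap $S(\mathbf x_j;\alpha_N)$ and its complement, use that the separation leaves at most one node per cap, and apply the Jacobi estimates (\ref{JacobiIneq})--(\ref{JacobiIneq1}) to show the off-diagonal part is dominated by the diagonal. The delicate point I would watch most carefully is exactly the input needed here — the regularity of the configuration (property (R) / well-separation, (\ref{propertyR}), (\ref{condiii})) — and whether it must be assumed or can be extracted from the QMC-design hypothesis itself.
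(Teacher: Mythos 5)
Your treatment of the low-degree part is correct and clean: for $\ell\le[N^{1/d}]$ the weight comparison $w_{\ell}(d,\gamma)\ll(\ln N)^{2(\gamma-\gamma')}w_{\ell}(d,\gamma')$ does give $|Q[X_N](f_{\le L})-\mathrm{I}(f_{\le L})|\ll N^{-1/2}(\ln N)^{-\gamma'+\frac12}$ from the QMC hypothesis for $\gamma$ alone. But the high-degree tail is a genuine gap, and you have correctly identified it as such without resolving it. The theorem assumes \emph{only} that $(X_N)_N$ is a sequence of QMC designs for $\mathbb{H}^{(\frac d2,\gamma)}$; no well-separation (\ref{condiii}) and no property (R) (\ref{propertyR}) are available, and neither can be extracted from the QMC hypothesis (a QMC design may contain nearly coincident points). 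So the mechanism of Lemma~\ref{lem3} and Remark~\ref{rem1} — cap splitting plus the Jacobi bounds (\ref{JacobiIneq})--(\ref{JacobiIneq1}) — is not applicable, and your plan for the tail cannot be carried out under the stated hypotheses. Nor does the hypothesis help termwise or blockwise: the single bound $\sum_{\ell>L}w_{\ell}(d,\gamma)^{-1}Z(d,\ell)N^{-2}\sum_{i,j}P_{\ell}^{(d)}(\langle\mathbf{x}_i,\mathbf{x}_j\rangle)\ll N^{-1}(\ln N)^{1-2\gamma}$ gives no decay as the truncation level increases, so a priori all of this mass could sit at frequencies $\ell$ so large that the unbounded factor $(\ln(3+\lambda_{\ell}))^{2(\gamma-\gamma')}$ destroys the estimate.

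The ingredient you are missing is the one the paper uses (it proves Theorem~\ref{thm5} via Lemma~\ref{lem4} and states that Theorem~\ref{thm8} follows the same lines): the Laplace-transform/heat-kernel representation. Writing $\mathrm{wce}^2$ as $\int_0^\infty e^{-t}g(t)h(t)\,dt$ with $h$ as in (\ref{function_h}), one exploits that $h$ is nonnegative and \emph{decreasing} in $t$; together with the integral representation (\ref{wceLaplaceHs}) (or its $\gamma$-analogue) this converts the single number $\varepsilon$ determined by the worst-case error into the scale-by-scale bound $t^{d/2}h(t)\ll\varepsilon^{d/2}$ for $0<t<\varepsilon/2$, i.e.\ (\ref{uniformBound_h}). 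Since the frequencies $\ell\gtrsim t^{-1/2}$ are exactly those seen by $h(t)$, this pointwise control of $h$ at small $t$ is the substitute for geometric regularity of the nodes, and it is what tames the high-degree tail after integrating against the kernel $g$ of (\ref{InverseLaplaceTransform}) (with $\gamma$ replaced by $\gamma'$, which is where the ``additional estimations'' on the logarithmic factors enter). Either import this monotonicity argument, or restate your theorem with an added separation hypothesis — as written, your proof establishes a strictly weaker result than Theorem~\ref{thm8}.
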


We will prove here only Theorem~\ref{thm5}. The proof of Theorem~\ref{thm8} follows the same lines as  that of Theorem~\ref{thm5} with some additional estimations.

Proof of Theorem~\ref{thm5} is based on the following lemma.

 \begin{lemma}\label{lem4} Assume that there exists a $\delta>0$, such that 
 \begin{align}\label{LemmaIneq}
\mathrm{wce}(Q[X_{N}];\mathbb{H}^{s}(\mathbb{S}^{d}))\ll N^{-\delta},
\end{align}
  holds for some $s>\frac{d}{2}$. Then for $\gamma>\frac{1}{2}$ there exists a
  constant $C(d,s,\delta, \gamma)$ such that for all $N$
 \begin{align}\label{Lemma4}
\mathrm{wce}(Q[X_{N}];\mathbb{H}^{(\frac{d}{2},\gamma)}(\mathbb{S}^{d}))<C(d,s,\delta,\gamma)
[\mathrm{wce}(Q[X_{N}];\mathbb{H}^{s}(\mathbb{S}^{d}))]^{\frac{d}{2s}}(\ln N)^{-\gamma+\frac{1}{2}}
\end{align}
holds.
\end{lemma}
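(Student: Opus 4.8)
Throughout I would use the equal-weight representation of the squared error. For $\ell\ge1$ put
\[
A_\ell:=\frac1{N^2}\sum_{i,j=1}^N Z(d,\ell)P_\ell^{(d)}(\langle\mathbf{x}_i,\mathbf{x}_j\rangle)
=\sum_{k=1}^{Z(d,\ell)}\Bigl(\tfrac1N\sum_{i=1}^N Y_{\ell,k}^{(d)}(\mathbf{x}_i)\Bigr)^2\ge0 ,
\]
so that, by \eqref{wceKernel}--\eqref{kernel0} and \eqref{normSobolev},
\[
\mathrm{wce}(Q[X_N];\mathbb{H}^{(\frac d2,\gamma)})^2=\sum_{\ell\ge1}w_\ell(d,\gamma)^{-1}A_\ell,
\qquad E^2:=\mathrm{wce}(Q[X_N];\mathbb{H}^{s})^2=\sum_{\ell\ge1}(1+\lambda_\ell)^{-s}A_\ell .
\]
Squaring \eqref{Lemma4} reduces the claim to $\sum_{\ell\ge1}w_\ell(d,\gamma)^{-1}A_\ell\ll (E^2)^{d/(2s)}(\ln N)^{1-2\gamma}$. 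I would introduce the cut-off $L$ defined by $1+\lambda_L\asymp E^{-2/s}$ (i.e. $L\asymp E^{-1/s}$) and split the sum at $\ell=L$. Here \eqref{LemmaIneq} enters only to guarantee $E\ll N^{-\delta}$, hence $\ln L\asymp\ln N$, while the lower bound \eqref{lowerBoundWceHs} gives $E\gg N^{-s/d}$, i.e. $(E^2)^{d/(2s)}\gg N^{-1}$, which is what makes the main term below admissible.

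For the low range I would use $s>\frac d2$. Writing $(1+\lambda_\ell)^{-d/2}=(1+\lambda_\ell)^{s-d/2}(1+\lambda_\ell)^{-s}$ and noting that $(1+\lambda_\ell)^{s-d/2}$ is increasing gives $(1+\lambda_\ell)^{-d/2}\le(1+\lambda_L)^{s-d/2}(1+\lambda_\ell)^{-s}$ for $\ell\le L$; summing against the nonnegative $A_\ell$ and keeping track of the logarithmic weight dyadically (it is largest at the top scale $\ell\asymp L$) yields
\[
\sum_{\ell\le L}w_\ell(d,\gamma)^{-1}A_\ell\ll(1+\lambda_L)^{s-d/2}(\ln L)^{-2\gamma}E^2\asymp(E^2)^{d/(2s)}(\ln N)^{-2\gamma},
\]
by the choice of $L$, \eqref{Zd} and \eqref{gamma}. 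This is smaller than the asserted bound by a factor $\ln N$, so the low range is harmless.

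The high range $\ell>L$ carries the main term and the real difficulty. By Lemma~\ref{lem1} (applied with $n=L$) the tail is dominated by its Jacobi form,
\[
\sum_{\ell>L}w_\ell(d,\gamma)^{-1}A_\ell\ll\frac1{N^2}\sum_{i,j=1}^N\sum_{\ell>L}\ell^{-\frac d2-K}(\ln\ell)^{-2\gamma}P_\ell^{(\frac d2+K-1,\frac d2-1)}(\langle\mathbf{x}_i,\mathbf{x}_j\rangle),
\]
which is exactly the quantity estimated in Lemma~\ref{lem3} with $t=L\asymp N^{1/d}$. Its diagonal part reproduces the expected main term: by \eqref{JacobiMax} and the integral test,
\[
\frac1N\sum_{\ell>L}\ell^{-1}(\ln\ell)^{-2\gamma}\asymp N^{-1}(\ln L)^{1-2\gamma}\ll(E^2)^{d/(2s)}(\ln N)^{1-2\gamma},
\]
and this is the origin of both the surviving power $N^{-1}$ and the factor $(\ln N)^{1-2\gamma}$. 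The crux is the off-diagonal remainder: one bounds the truncated kernel away from the diagonal through the Jacobi asymptotics \eqref{JacobiIneq}--\eqref{JacobiIneq1} (cf. \eqref{JacobiD}) and then controls $\frac1{N^2}\sum_{i\ne j}(\sin\theta_{ij})^{-\frac d2-K\pm\frac12}$ by the Brauchart--Hesse distributional estimate \eqref{BrauchartHesse}, precisely as in the proof of Lemma~\ref{lem3}. I expect this to be the main obstacle: those estimates rest on a separation/regularity property of the nodes, whereas a QMC design is a priori controlled only through the aggregate bound \eqref{LemmaIneq}. The delicate point is therefore to extract from $E\ll N^{-\delta}$ enough information about the near-diagonal pair distribution to render the off-diagonal tail $\ll N^{-1}(\ln N)^{1-2\gamma}$. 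Adding the low- and high-range estimates and taking square roots then gives \eqref{Lemma4}.
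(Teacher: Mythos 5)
Your spectral splitting at $L\asymp E^{-1/s}$ is a reasonable start, and the low-range estimate can be made to work (the correct reason is that the ratio $w_\ell(d,\gamma)^{-1}/(1+\lambda_\ell)^{-s}\asymp(1+\lambda_\ell)^{s-d/2}(\ln(3+\lambda_\ell))^{-2\gamma}$ is maximal at the top scale $\ell\asymp L$ because the power beats the logarithm — the logarithmic weight itself is largest at the \emph{bottom} scale, not the top, so your parenthetical is backwards, though the dyadic bookkeeping still closes). The genuine gap is exactly where you suspect it: the high range $\ell>L$. Lemma~\ref{lem3} is simply not applicable there. Its proof uses the well-separation hypothesis \eqref{condiii} to isolate the diagonal and the antipodal cap, and the Brauchart--Hesse estimate \eqref{BrauchartHesse} for $\frac1{N^2}\sum_{i\ne j}(\sin\theta_{ij})^{-\frac d2-K\pm\frac12}$ rests on property (R), i.e.\ on local regularity of the nodes; a sequence satisfying only the aggregate bound \eqref{LemmaIneq} need have neither property. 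Moreover your identification $t=L\asymp N^{1/d}$ is unwarranted: from $N^{-s/d}\ll E\ll N^{-\delta}$ one only gets $N^{\delta/s}\ll L\ll N^{1/d}$. So the off-diagonal tail — which you correctly identify as carrying the whole difficulty — is left unproved, and no mechanism is offered for converting $E\ll N^{-\delta}$ into control of the near-diagonal pair distribution.

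The paper closes precisely this gap by a different device: it represents $w_\ell(d,\gamma)^{-1}$ as a Laplace transform \eqref{LaplaceTransform}, so that $\mathrm{wce}(Q[X_N];\mathbb{H}^{(\frac d2,\gamma)})^2=\int_0^\infty e^{-t}g(t)h(t)\,dt$ with $h(t)=\sum_{\ell\ge1}e^{-\lambda_\ell t}A_\ell\ge0$ the (centred) heat-kernel pair sum, and compares with the analogous representation \eqref{wceLaplaceHs} of the Sobolev error. The high-frequency regime corresponds to $0<t<\varepsilon/2$ with $\varepsilon=E^{2/s}$, and the decisive input is the bound $t^{d/2}h(t)\ll\varepsilon^{d/2}$ of \eqref{uniformBound_h}, taken from Brauchart--Saff--Sloan--Womersley, which is derived from the Sobolev worst-case error alone and encodes exactly the near-diagonal pair-counting information you are missing. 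Combined with the kernel estimate $|g(t)|\ll t^{\frac d2-1}(\ln\frac1t)^{-2\gamma}$, this yields the main term $\varepsilon^{d/2}(\ln N)^{1-2\gamma}$ in \eqref{small_t} with no separation or design hypothesis. Without \eqref{uniformBound_h} or an equivalent substitute, your argument cannot be completed as written.
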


\begin{proof}[Proof of Lemma~\ref{lem4}] 
  The proof of (\ref{Lemma4}) goes along the lines as that of Lemma 26 in
  \cite{Brauchart-Saff-Sloan+2014:qmc_designs} and Theorem 3.1 in
  \cite{BrandChoirColzGigSeriTravQuadrature}.

We write 
\begin{align}\label{LaplaceTransform}
\frac{1}{(1+\lambda_{\ell})^{\frac{d}{2}}(\ln(3+\lambda_{\ell}))^{2\gamma}}=\int\limits_{0}^{\infty}e^{-(1+\lambda_{\ell})t}g(t)dt,
\end{align}
in terms of the Laplace transform of the function $g$ given by the inverse
Laplace transform
\begin{align}\label{InverseLaplaceTransform}
g(t)=g(d,\gamma,t):=\frac{1}{2\pi i}\int\limits_{\frac{1}{t}-i\infty}^{\frac{1}{t}+i\infty}
z^{-\frac{d}{2}}(\ln(z+2))^{-2\gamma}e^{tz}dz.
\end{align}

First of all, let us show, that the function $g$ satisfies
\begin{align}\label{estimation_g}
|g(t)|\ll\begin{cases}
 t^{\frac{d}{2}-1}, & \text{if }
 t\geq1, \\
 t^{\frac{d}{2}-1} (\ln \frac{1}{t})^{-2\gamma}, & \text{if }  0 <t<1.
  \end{cases}
\end{align}

Indeed, substituting $tz=1+ix$ and integrating by parts, we obtain
 \begin{multline}\label{estimation_g1}
\frac{1}{2\pi i}\int\limits_{\frac{1}{t}-i\infty}^{\frac{1}{t}+i\infty}
z^{-\frac{d}{2}}(\ln(z+2))^{-2\gamma}e^{tz}dz\\
=\frac{e}{2\pi }t^{\frac{d}{2}-1}\int\limits_{-\infty}^{\infty}
(1+ix)^{-\frac{d}{2}}\Big(\ln\Big(2+\frac{1+ix}{t}\Big)\Big)^{-2\gamma}e^{ix}dx\\
=\frac{e}{2\pi }t^{\frac{d}{2}-1}\int\limits_{-\infty}^{\infty}e^{ix}
\Big[\frac{d}{2}(1+ix)^{-\frac{d}{2}-1}\Big(\ln\Big(2+\frac{1+ix}{t}\Big)\Big)^{-2\gamma}\\
+2\gamma(1+ix)^{-\frac{d}{2}}(2t+1+ix)^{-1}\Big(\ln\Big(2+\frac{1+ix}{t}\Big)\Big)^{-2\gamma-1}\Big]dx.
\end{multline}

For large values of $t: t\geq1$ from (\ref{estimation_g1}) one can easily get
$|g(t)|\ll t^{\frac{d}{2}-1}$.

In turn, for small values of $t: 0<t<1$, the inequalities
\begin{align*}
\Big|\ln\Big(2+\frac{1+ix}{t}\Big) \Big|>\ln \frac{1}{t}, \ \ \ 
\Big(\ln \frac{1}{t}\Big)^{-2\gamma-1}<\Big(\ln \frac{1}{t}\Big)^{-2\gamma}, \ \ 0<t<1,
\end{align*}
and relation (\ref{estimation_g1}) imply that 
$|g(t)|\ll t^{\frac{d}{2}-1} (\ln \frac{1}{t})^{-2\gamma}$.

The representation of the worst-case error (\ref{wceKernel}) allows to write
\begin{align}\label{wceLaplaceH}
\mathrm{wce}(Q[X_{N}];\mathbb{H}^{(\frac{d}{2},\gamma)}(\mathbb{S}^{d}))^{2}=\int\limits_{0}^{\infty}e^{-t}g(t)h(t)dt,
\end{align}
where
\begin{align}\label{function_h}
h(t)=h(t;\mathbf{x}_{1},...,\mathbf{x}_{N}):=\frac{1}{N^{2}}\sum\limits_{i,j=1}^{N}\tilde{H}(t,\langle\mathbf{x}_{i},\mathbf{x}_{j}\rangle),
\end{align}
and $\tilde{H}$ denotes the heat kernel with the constant term removed:
\begin{align}\label{heatKernel}
  1+\tilde H(t,\mathbf{x},\mathbf{y}):=
  \sum\limits_{\ell=0}^{\infty}e^{-\lambda_{\ell}t}Z(d,\ell)
 P_{\ell}^{(d)}(\langle\mathbf{x},\mathbf{y}\rangle), \ \ \mathbf{x},\mathbf{y}\in \mathbb{S}^{d},
\end{align}
which is fundamental solution to the heat equation $\frac{\partial u}{\partial t}+\Delta^{*}_{d}u=0$ on $\mathbb{R}_{+}\times \mathbb{S}^{d}$.

The worst-case error for Sobolev spaces in terms of Laplace transform can be
written in the form (see formula (46) in
\cite{Brauchart-Saff-Sloan+2014:qmc_designs})
\begin{align}\label{wceLaplaceHs}
\mathrm{wce}(Q[X_{N}];\mathbb{H}^{s}(\mathbb{S}^{d}))^{2}=\frac{1}{\Gamma(s)}\int\limits_{0}^{\infty}e^{-t}t^{s-1}h(t)dt.
\end{align}

Let $\varepsilon:=[\mathrm{wce}(Q[X_{N}];\mathbb{H}^{s}(\mathbb{S}^{d}))]^{\frac{2}{s}}$, and $\varepsilon\ll N^{-\delta}<1$ by assumption.

The first inequality from (\ref{estimation_g}) and (\ref{wceLaplaceHs}) yield
\begin{align}\label{large_t}
\bigg|\int\limits_{1}^{\infty}e^{-t}g(t)h(t)dt \bigg|\ll \int\limits_{1}^{\infty}e^{-t}t^{\frac{d}{2}-1}h(t)dt \ll \frac{1}{\Gamma(s)}\int\limits_{0}^{\infty}e^{-t}t^{s-1}h(t)dt=\varepsilon^{s}, \ \ s>\frac{d}{2}.
 \end{align}
 Taking into account the second inequality from (\ref{estimation_g}),
 (\ref{LemmaIneq}) and (\ref{wceLaplaceHs}), we get
\begin{multline}\label{medium_t}
\bigg|\int\limits_{\frac{\varepsilon}{2}}^{1}e^{-t}g(t)h(t)dt \bigg|\ll 
\int\limits_{\frac{\varepsilon}{2}}^{1}e^{-t}t^{\frac{d}{2}-1} \Big(\ln \frac{1}{t}\Big)^{-2\gamma}h(t)dt 
\\
\leq \Big(\frac{\varepsilon}{2}\Big)^{\frac{d}{2}-s}\Big( \ln\Big( \frac{2}{\varepsilon}\Big) \Big)^{-2\gamma}\int\limits_{\frac{\varepsilon}{2}}^{1}e^{-t}t^{s-1} h(t)dt 
\\
\ll \varepsilon^{\frac{d}{2}-s}(\ln N)^{-2\gamma}\frac{1}{\Gamma(s)}\int\limits_{0}^{\infty}e^{-t}t^{s-1} h(t)dt =\varepsilon^{\frac{d}{2}}(\ln N)^{-2\gamma}.
 \end{multline}
 
 In \cite{Brauchart-Saff-Sloan+2014:qmc_designs} it was proved, that $h(t)$ is
 uniformly bounded on $[0,1)$, and for $0<t<\frac{\varepsilon}{2}$ the
 following etimate holds
 \begin{align}\label{uniformBound_h}
t^{\frac{d}{2}}h(t)\ll \varepsilon^{\frac{d}{2}}.
 \end{align}
 
 Applying (\ref{estimation_g}), relations (\ref{LemmaIneq}) and
 (\ref{uniformBound_h}), we arrive at the estimate
 \begin{multline}\label{small_t}
\bigg|\int\limits_{0}^{\frac{\varepsilon}{2}}e^{-t}g(t)h(t)dt \bigg|\ll 
\int\limits_{0}^{\frac{\varepsilon}{2}}e^{-t}t^{\frac{d}{2}-1} \Big(\ln \frac{1}{t}\Big)^{-2\gamma}h(t)dt 
\\
\ll\varepsilon^{\frac{d}{2}}\int\limits_{0}^{\frac{\varepsilon}{2}}e^{-t}t^{-1} \Big(\ln \frac{1}{t}\Big)^{-2\gamma}h(t)dt <
\varepsilon^{\frac{d}{2}}\int\limits_{0}^{\frac{\varepsilon}{2}}t^{-1} \Big(\ln \frac{1}{t}\Big)^{-2\gamma}h(t)dt  \\
=\frac{1}{2\gamma-1}\varepsilon^{\frac{d}{2}}\Big(\ln \frac{2}{\varepsilon}\Big)^{-2\gamma+1}\ll\varepsilon^{\frac{d}{2}}(\ln N)^{-2\gamma+1}.
\end{multline}

Formulas (\ref{wceLaplaceH}), (\ref{large_t}), (\ref{medium_t}) and (\ref{small_t}) imply
\begin{align*}
\mathrm{wce}(Q[X_{N}];\mathbb{H}^{(\frac{d}{2},\gamma)}(\mathbb{S}^{d}))^{2}\ll
\varepsilon^{s}+
\varepsilon^{\frac{d}{2}}(\ln N)^{-2\gamma}+
 \varepsilon^{\frac{d}{2}}(\ln N)^{-2\gamma+1}\ll \varepsilon^{\frac{d}{2}}(\ln N)^{-2\gamma+1}
 \end{align*}
and  Lemma~\ref{lem4} is proved.
\end{proof}

 \begin{proof}[Proof of Theorem~\ref{thm5}] 
If $X_{N}$ is a sequence of $N$-point QMC designs for  $\mathbb{H}^{s}(\mathbb{S}^{d})$, $s>\frac{d}{2}$, then by  (\ref{QmcHsDefinition}) and (\ref{Lemma4}) 

\begin{multline*}
\mathrm{wce}(Q[X_{N}];\mathbb{H}^{(\frac{d}{2},\gamma)}(\mathbb{S}^{d}))<C(d,s,\gamma)
[\mathrm{wce}(Q[X_{N}];\mathbb{H}^{s}(\mathbb{S}^{d}))]^{\frac{d}{2s}}(\ln N)^{-\gamma+\frac{1}{2}} \\
\ll (N^{-\frac{s}{d}})^{\frac{d}{2s}}(\ln N)^{-\gamma+\frac{1}{2}}=N^{-\frac{1}{2}}(\ln N)^{-\gamma+\frac{1}{2}}
 \end{multline*}
and  Theorem~\ref{thm5} is proved.
 \end{proof}

 \subsection{Examples of QMC designs for classes $\mathbb{H}^{(\frac{d}{2},\gamma)}(\mathbb{S}^{d})$}

 In \cite{Brauchart-Saff-Sloan+2014:qmc_designs} it was shown, that the
 maximisers of the generalised sum of distances
\begin{equation*}
\sum\limits_{i,j=1}^{N}|\mathbf{x}_{i}-\mathbf{x}_{j}|^{2s-d}, \quad N=2,3,4,\ldots
\end{equation*} 
form a sequence of QMC designs for $\mathbb{H}^{s}(\mathbb{S}^{d})$ for $s$ in
the interval $(\frac{d}{2}, \frac{d}{2}+1)$.

Consequently, from this fact and from Theorem~\ref{thm5} we obtain the
statement.

\begin{thm} Let $\gamma>\frac{1}{2}$ and $0<\alpha<2$. Then, the maximisers of
  generalised sum of distances
\begin{equation*}
  \sum\limits_{i,j=1}^{N}|\mathbf{x}_{i}-\mathbf{x}_{j}|^{\alpha}, \quad N=2,3,4,\ldots
\end{equation*} 
form a sequence of QMC designs for
$\mathbb{H}^{(\frac{d}{2},\gamma)}(\mathbb{S}^{d})$.
\end{thm}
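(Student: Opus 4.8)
The plan is to recognise this statement as an immediate corollary of Theorem~\ref{thm5} combined with the cited result of \cite{Brauchart-Saff-Sloan+2014:qmc_designs}. The single idea is the reparametrisation $\alpha=2s-d$, equivalently $s=\frac{d+\alpha}{2}$, which converts the given distance functional into one whose maximisers are already known to be QMC designs for a Sobolev space.

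First I would verify that the admissible range of the exponent $\alpha$ matches the range of smoothness for which the external QMC-design result applies. Since $0<\alpha<2$, the substitution $s=\frac{d+\alpha}{2}$ yields $\frac{d}{2}<s<\frac{d}{2}+1$, which is precisely the interval $\left(\frac{d}{2},\frac{d}{2}+1\right)$ appearing in \cite{Brauchart-Saff-Sloan+2014:qmc_designs}. Hence the generalised sum of distances $\sum_{i,j=1}^{N}|\mathbf{x}_{i}-\mathbf{x}_{j}|^{\alpha}$ is exactly the functional $\sum_{i,j=1}^{N}|\mathbf{x}_{i}-\mathbf{x}_{j}|^{2s-d}$ whose maximisers form a sequence of QMC designs for $\mathbb{H}^{s}(\mathbb{S}^{d})$ with this particular value of $s$.

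Next I would apply Theorem~\ref{thm5}: a sequence of QMC designs for $\mathbb{H}^{s}(\mathbb{S}^{d})$ with $s>\frac{d}{2}$ is automatically a sequence of QMC designs for $\mathbb{H}^{(\frac{d}{2},\gamma)}(\mathbb{S}^{d})$ for every $\gamma>\frac{1}{2}$. Since the value $s=\frac{d+\alpha}{2}$ produced above satisfies $s>\frac{d}{2}$, the desired conclusion follows directly for all $\gamma>\frac{1}{2}$, completing the argument.

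There is essentially no genuine obstacle here; the substance of the argument is carried entirely by Theorem~\ref{thm5} (and, through it, by Lemma~\ref{lem4} and the Laplace-transform comparison of the two worst-case errors) together with the external characterisation of distance-maximisers as QMC designs. The only point requiring care is the boundary behaviour of the parameter: one must keep $s$ strictly inside $\left(\frac{d}{2},\frac{d}{2}+1\right)$, so that the excluded endpoints $\alpha=0$ and $\alpha=2$ do not intrude, which is guaranteed by the hypothesis $0<\alpha<2$.
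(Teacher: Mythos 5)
Your proposal is correct and follows exactly the paper's own route: the paper likewise cites the result of Brauchart--Saff--Sloan--Womersley that maximisers of $\sum_{i,j}|\mathbf{x}_i-\mathbf{x}_j|^{2s-d}$ are QMC designs for $\mathbb{H}^{s}(\mathbb{S}^{d})$ with $s\in\left(\frac{d}{2},\frac{d}{2}+1\right)$ and then applies Theorem~\ref{thm5}. Your explicit check that $\alpha=2s-d$ with $0<\alpha<2$ corresponds precisely to that interval of $s$ is the only detail the paper leaves implicit.
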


\begin{thm} If $X_{N}^{*}$, $N=2,3,\ldots,$ minimises the energy functional
    \begin{equation*}
    \sum\limits_{i,j=1}^{N}\tilde{K}_{\gamma,d}(\mathbf{x}_{i},\mathbf{x}_{j}),
\end{equation*}    
 where $\tilde{K}_{\gamma,d}(\mathbf{x}, \mathbf{y})$   is defined by (\ref{kernel}), then there exists $C_{d,\gamma}>0$, such that for all $N\geq 2$
 \begin{equation*}
\mathrm{wce}(Q[X^{*}_{N}];\mathbb{H}^{(\frac{d}{2},\gamma)}(\mathbb{S}^{d}))\leq \frac{C_{d,\gamma}}{N^{\frac{1}{2}}(\ln N)^{\gamma-\frac{1}{2}}}.
\end{equation*}
 Consequently, $X_{N}^{*}$ is a sequence of QMC designs for 
 $\mathbb{H}^{(\frac{d}{2},\gamma)}(\mathbb{S}^{d})$.
 \end{thm}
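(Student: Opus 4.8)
The plan is to recognise that, for the equal-weight rule, the energy functional and the squared worst-case error coincide up to a configuration-independent constant, so that the energy minimiser $X_N^*$ is automatically the best possible $N$-point equal-weight rule; the claimed bound then follows simply by comparison with any configuration already known to be a QMC design.

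First I would use the worst-case error representation (\ref{wceKernel}) with equal weights $\omega_i=1/N$, which gives
\[
\mathrm{wce}(Q[X_N];\mathbb{H}^{(\frac{d}{2},\gamma)}(\mathbb{S}^d))^2
=\frac{1}{N^2}\sum_{i,j=1}^N\tilde K_{d,\gamma}(\mathbf{x}_i,\mathbf{x}_j).
\]
Since the full kernel (\ref{kernel}) and the reduced kernel (\ref{kernel0}) differ only in the $\ell=0$ term, their energies differ by the constant $N^2(\ln 3)^{-2\gamma}$, which is independent of the positions of the points; hence minimising either energy produces the same minimiser, and for $\gamma>\frac12$ the series defining $\tilde K_{d,\gamma}$ converges on the diagonal (the summand being $\asymp\ell^{-1}(\ln\ell)^{-2\gamma}$), so the energy is continuous on the compact set $(\mathbb{S}^d)^N$ and a minimiser indeed exists. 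Reading the displayed identity backwards, $X_N^*$ is exactly the $N$-point configuration minimising the equal-weight worst-case error, so that
\[
\mathrm{wce}(Q[X_N^*];\mathbb{H}^{(\frac{d}{2},\gamma)}(\mathbb{S}^d))
\le\mathrm{wce}(Q[X];\mathbb{H}^{(\frac{d}{2},\gamma)}(\mathbb{S}^d))
\]
for every $N$-point configuration $X$.

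It then remains only to produce one good competitor valid for each $N$. The preceding theorem shows that the maximisers of the generalised sum of distances form a sequence of QMC designs for $\mathbb{H}^{(\frac{d}{2},\gamma)}(\mathbb{S}^d)$; calling such an $N$-point set $Z_N$ (defined for all $N\ge2$) and inserting it into the previous inequality, the defining bound (\ref{QMCdefinition}) of a QMC design yields
\[
\mathrm{wce}(Q[X_N^*];\mathbb{H}^{(\frac{d}{2},\gamma)}(\mathbb{S}^d))
\le\mathrm{wce}(Q[Z_N];\mathbb{H}^{(\frac{d}{2},\gamma)}(\mathbb{S}^d))
\le\frac{c(\gamma,d)}{N^{1/2}(\ln N)^{\gamma-1/2}},
\]
which is precisely the assertion. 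There is no genuine analytic obstacle; the only point needing care is that the comparison configuration must be available for every integer $N$, and not merely for the sparse values $N(t)\asymp t^d$ at which the well-separated $t$-designs of Theorem~\ref{thm1} are guaranteed. This is exactly why I invoke the generalised-sum-of-distances maximisers---equivalently, any sequence of QMC designs for some $\mathbb{H}^s$ promoted via Theorem~\ref{thm5}---rather than a $t$-design, since these are defined for all $N$.
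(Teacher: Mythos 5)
Your proposal is correct and is precisely the argument the paper intends (the theorem is stated without proof): by \eqref{wceKernel} the energy of an equal-weight configuration equals $N^2$ times the squared worst-case error up to the configuration-independent $\ell=0$ term, so the energy minimiser $X_N^*$ minimises the worst-case error over all $N$-point sets, and comparison with the generalised-sum-of-distances maximisers from the preceding theorem (available for every $N\geq 2$) gives the stated bound. Your care about the $K_{d,\gamma}$ versus $\tilde K_{d,\gamma}$ discrepancy in the theorem's reference to \eqref{kernel}, and about needing a competitor defined for all $N$ rather than only for $N(t)\asymp t^d$, is exactly right.
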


%  \section{Acknowledgements}
%  \label{}

% Authors are supported by the Austrian Science Fund FWF project F5503 (part of the Special Research Program (SFB)
% ”Quasi-Monte Carlo Methods: Theory and Applications”)
% 

\section*{References}

\providecommand{\bysame}{\leavevmode\hbox to3em{\hrulefill}\thinspace}
\providecommand{\MR}{\relax\ifhmode\unskip\space\fi MR }
% \MRhref is called by the amsart/book/proc definition of \MR.
\providecommand{\MRhref}[2]{%
  \href{http://www.ams.org/mathscinet-getitem?mr=#1}{#2}
}
\providecommand{\href}[2]{#2}

\end{document}